\newcommand{\TC}{\operatorname{TC}}
 \newcommand{\slope}{\operatorname{slope}}
 \newcommand{\RR}{\mathbf{R}}  
 \newcommand{\BB}{\mathbf{B}}  
 \newcommand{\dist}{\operatorname{dist}}
 \newcommand{\eps}{\epsilon}
 \newcommand{\Tan}{\operatorname{Tan}}
\newcommand{\vv}{\bold v}
\newcommand{\ee}{\mathbf{e}}
\def\begfig {
\begin{figure}
\small }
\def\endfig {
\normalsize
\end{figure}
}
    \newtheorem{claim}{Claim}
    \newtheorem{theorem}    {Theorem}       [section]
    \newtheorem{lemma}      [theorem]       {Lemma}
    \newtheorem{corollary}  [theorem]     {Corollary}
    \newtheorem*{theorem*}{Theorem}
    \newtheorem*{corollary*}{Corollary}
    \theoremstyle{definition}
    \theoremstyle{definition}
    \newtheorem{remark}   [theorem]       {Remark}
\begin{document}

\title[Embedded Minimal Surfaces]{On the Compactness Theorem for Embedded Minimal Surfaces in $3$-manifolds with
   Locally Bounded Area and Genus}
\subjclass[2000]{Primary: 53A10; Secondary: 49Q05, 53C42}
\author{Brian White}
\address{Department of Mathematics\\ Stanford University\\ Stanford, CA 94305}
\thanks{The research was supported by NSF
   grants~DMS--1105330 and DMS--1404282}
\email{white@math.stanford.edu}
\date{March 6, 2015.}
\begin{abstract}
Given a sequence of properly embedded minimal surfaces in a $3$-manifold with
local bounds on area and genus, we prove subsequential convergence, smooth
away from a discrete set, to a smooth embedded limit surface, possibly with multiplicity,
and we analyze what happens when one blows up the surfaces near a point 
where the convergence is not smooth.
\end{abstract}
\subjclass[2010]{53A10 (primary), and 49Q05 (secondary)} 

\maketitle

\renewcommand{\thesubsection}{\thetheorem}

\section{Introduction}

In this paper, we prove several results, most of which can be summarized as follows:

\begin{theorem}\label{summary-theorem}
Let $\Omega$ be an open subset of a Riemannian $3$-manifold.
Let $g_i$ be a sequence of smooth Riemannian metrics on $\Omega$ converging smoothly
to a Riemannian metric $g$.
Let $M_i\subset \Omega$ be a sequence of properly embedded surfaces 
such that $M_i$ is minimal with respect to $g_i$.
Suppose also that  the area and the genus of $M_i$ are uniformly bounded on compact subsets of $\Omega$.
Then (after passing to a subsequence) the $M_i$ converge to a  smooth, properly embedded
$g$-minimal surface $M$.  For each connected component $\Sigma$ of $M$,
either
\begin{enumerate}
\item the convergence to $\Sigma$ is smooth with multiplicity one, or
\item the convergence is smooth (with some multiplicity $>1$) away from a discrete set $S$.
\end{enumerate}
In the second case, if $\Sigma$ is two-sided, then it must be stable.

Now suppose that $\Omega$ is an open subset of $\RR^3$.  (The metric $g$ need not be flat.)
If $p_i\in M_i$ converges to $p\in M$, then (after passing to a further subsequence)
either
\[
    \Tan(M_i,p_i)\to \Tan(M,p),
\]
or there exists constants $\lambda_i>0$ tending to $\infty$ such that the surfaces
\[
    \lambda_i(M_i - p_i)
\]
converge smoothly and with multiplicity $1$
 to a non-flat, complete, properly embedded minimal surface $M'\subset \RR^3$ of finite total 
curvature with ends parallel to $\Tan(M,p)$.
\end{theorem}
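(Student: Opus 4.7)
The plan is to split the theorem into a local compactness theorem and a quantitative blow-up statement. For the first part, I would extract a varifold limit $V$ of $M_i$ using the local area bounds, then apply Allard's regularity theorem on the set where the multiplicity of $V$ is one to get smooth convergence. At points $p$ where Allard fails (high density or concentrated curvature), one shows the set of such points $S$ is discrete by exploiting the bounded genus: a blow-up at an accumulating singular point would produce a complete properly embedded minimal surface in $\RR^3$ with infinite genus, violating the local genus bound. When the limit has multiplicity $>1$, a standard argument (writing $M_i$ as a multigraph over $\Sigma$ and taking a normalized difference of sheets) produces a nontrivial Jacobi field on the universal cover, forcing stability in the two-sided case.

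Now suppose $\Omega\subset \RR^3$, let $M_i \to M$ be the resulting convergence, and let $p_i \in M_i$ with $p_i \to p \in M$. If the convergence is smooth near $p$, then by Arzela--Ascoli on the local graphs one automatically gets $\Tan(M_i,p_i) \to \Tan(M,p)$. Otherwise $p\in S$, and the failure of tangent planes to converge forces $|A_{M_i}|(q_i) \to \infty$ for some $q_i$ close to $p_i$. I would then apply a Choi--Schoen-type scale selection: find $q_i \in M_i$ and $\lambda_i \to \infty$ with $|A_{M_i}|(q_i)=\lambda_i$ and $|A_{M_i}| \le 2\lambda_i$ on the $g_i$-ball of radius $1/\lambda_i$ around $q_i$, so that after rescaling by $\lambda_i$ the curvature stays bounded on every compact set.

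Setting $M_i' := \lambda_i (M_i - q_i)$, the $M_i'$ are minimal for the rescaled metrics $g_i'$, which converge smoothly to the Euclidean metric because $\lambda_i \to \infty$. Local area bounds (from monotonicity) and local genus bounds transfer to $M_i'$, and the uniform curvature bound produces smooth subsequential convergence with multiplicity one on every compact set to a properly embedded minimal surface $M'\subset \RR^3$. The scale selection forces $|A_{M'}|(0) \ge 1/2$, so $M'$ is non-flat. Finite genus of $M'$ follows from the local genus bound for $M_i$ near $p$, and the sheets of $M_i$ accumulating on $\Sigma$ in a small neighborhood of $p$ (finitely many, by density bounds and monotonicity) provide the finitely many ends of $M'$. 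With finite topology and proper embedding in $\RR^3$, classical results of Osserman and Jorge--Meeks yield finite total curvature and planar ends; these ends are parallel to $\Tan(M,p)$ because, off any neighborhood of $S$, each sheet of $M_i$ is $C^1$-close to a translate of $\Tan(M,p)$.

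The main obstacle is the finite-ends argument for $M'$: one must show that the blow-up does not produce new ends beyond those coming from sheets of $M_i$ converging to $\Sigma$ near $p$. This is where the discreteness of $S$ and the precise topological control from bounded genus combine, in order to rule out a ``fractal'' production of ends during the rescaling, essentially by an iterative topology-counting argument. Once finite topology is established, the remaining conclusions follow from well-known results on embedded minimal surfaces of finite total curvature in $\RR^3$.
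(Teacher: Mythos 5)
The compactness and blow-up portions of your outline follow essentially the paper's route: a varifold/Allard limit, a discrete bad set, the normalized-difference-of-sheets Jacobi field giving stability, and a curvature-scale selection producing a non-flat, multiplicity-one, properly embedded limit $M'$. (The paper gets discreteness of $S$ and finite total curvature of $M'$ from Ilmanen's theorem that local area and genus bounds imply local \emph{total curvature} bounds, which are scale invariant, rather than from a genus-based accumulation argument.) Two supporting claims in your write-up are off: the implication ``finite topology and properly embedded in $\RR^3$ implies finite total curvature'' is false as stated (the helicoid), so finite total curvature should come from the scale-invariant local bound on $\TC(M_i)$ instead of from Osserman/Jorge--Meeks; and you have located the ``main obstacle'' in the wrong place, since finiteness of the ends is immediate once finite total curvature is known.

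The genuine gap is your final step, which is in fact the main content of the theorem and occupies Sections~\ref{annulus-section} and~\ref{proof-section} of the paper. You assert that the ends of $M'$ are parallel to $\Tan(M,p)$ ``because, off any neighborhood of $S$, each sheet of $M_i$ is $C^1$-close to a translate of $\Tan(M,p)$.'' That closeness holds at a fixed scale, but the ends of $M'$ are seen in $M_i$ at scales $|q|/\lambda_i\to 0$; between the unit scale (where $M_i$ is a small-slope multigraph over $M$) and the blow-up scale $1/\lambda_i$ there is a long annular neck over which the tangent planes could a priori drift, and nothing in your argument excludes this. Ruling it out is exactly the content of the Annulus Lemma~\ref{annulus-lemma}: for bounded-slope minimal graphs over $A(r_i,R)$ converging to $0$ away from the origin, the rescaled limits flatten at infinity (proved by normalizing with a foliation by minimal disks, trapping the limit in a halfspace, and identifying the tangent cone at infinity as a horizontal plane). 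Even that is not enough, because curvature can reconcentrate at intermediate scales, producing a hierarchy of necks between the unit scale and the scale $1/\lambda_i$; the paper handles this with the double induction on the multiplicity $m$ and on the local total curvature bound $N$ in Theorem~\ref{theorem-restated}. Your proposal contains no substitute for either ingredient.
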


Compactness theorems similar to Theorem~\ref{summary-theorem} were proved
in~\cite{choi-schoen}, \cite{anderson}, and~\cite{white-curvature-estimates}, and the proof of 
much of Theorem~\ref{summary-theorem} is very similar to the proofs in~\cite{choi-schoen} and 
\cite{white-curvature-estimates}.
However, \cite{choi-schoen} and~\cite{anderson} have hypotheses that are considerably more restrictive than
the hypotheses in Theorem~\ref{summary-theorem}. 
Also, Theorem~\ref{summary-theorem} has some very useful conclusions that are
not stated in~\cite{choi-schoen}, \cite{anderson}, or \cite{white-curvature-estimates}.
In particular, the conclusion that the ends of $M'$ are parallel to $\Tan(M,p)$
seems to be entirely new.   That conclusion is used in an essential way in the
recent proof~\cite{hoffman-traizet-white-2} of existence of helicoidal surfaces of arbitrary genus in $\RR^3$.  
The bulk of this paper (Sections~\ref{annulus-section} and~\ref{proof-section})
is devoted to proving that conclusion.

If one drops the assumption that the $M_i$ have locally bounded areas, the behavior
becomes considerably more complicated.
For example, even for simply connected $M_i$ in an open subset of $\RR^3$,
the curvatures of the $M_i$ can blow up on arbitrary $C^{1,1}$ curves \cite{meeks-weber-bending}
or on 
arbitrary closed subsets (such as Cantor sets) of a line~\cite{hoffman-white-sequences},~\cite{Kleene}.
See~\cite{ColdingMinicozzi-1}, \cite{ColdingMinicozzi-2}, \cite{ColdingMinicozzi-3}, \cite{ColdingMinicozzi-4},
and~\cite{meeks-regular}
for very powerful theorems analyzing the behavior of such sequences.
Based on those works, \cite{white-C1}*{corollary~3 and theorem~4}
 formulates a compactness theorem somewhat analogous
to the Compactness Theorem~\ref{summary-theorem} in this paper.

It would be very interesting to analyze what happens if one assumes local bounds on area but
 not on genus.
By passing to a subsequence, one can get weak convergence to a stationary integral varifold $V$.
The limit varifold has associated to it a flat chain mod $2$, and that  flat chain has
no boundary in the open set~\cite{white-currents}.  Thus, for example, the varifold
cannot have soapfilm-like triple junctions.  In fact, \cite{white-currents}
also proves the slightly stronger statement that if the original surfaces are orientable, then
 there is an integral current $T$ with no boundary
in the open set such that $T$ and $V$ determine the same flat chain mod $2$.
(The results in~\cite{white-currents} hold for arbitrary dimension and codimension.)
Nothing else seems to be known
about the class of stationary integral varifolds $V$ that arise as such a limit.

\section{The Main Theorems}\label{main-section}

If $M$ is a surface in a Riemannian $3$-manifold, we let $\TC(M)$ denote the
total curvature of $M$:
\[
   \TC(M) = \frac12\int_M(\kappa_1^2 + \kappa_2^2)\,dA,
\]
where $\kappa_1$ and $\kappa_2$ are the principal curvatures of $M$.

\begin{theorem}[Compactness Theorem]\label{compactness-theorem}
Let $\Omega$ be an open subset of smooth $3$-manifold.
Let $g_i$ be a sequence of smooth Riemannian metrics on $\Omega$ converging smoothly
to a Riemannian metric $g$.
Let $M_i\subset \Omega$ be a sequence of properly embedded surfaces 
such that $M_i$ is minimal with respect to $g_i$.
Suppose also that  the area and the genus of $M_i$ are bounded independently of $i$ on compact
subsets of $\Omega$.

Then the total curvatures of the $M_i$ are also uniformly bounded on compact subsets of $\Omega$.
After passing to a subsequence, the $M_i$ converge to a  smooth, properly embedded,
$g$-minimal surface $M$, and the convergence is smooth away from a discrete set $S$.
  For each connected component $\Sigma$ of $M$,
either
\begin{enumerate}
\item the convergence to $\Sigma$ is smooth everywhere with multiplicity $1$, or
\item the convergence to $\Sigma$ is smooth with some multiplicity $>1$ away from $\Sigma\cap S$.
 In this case, if $\Sigma$ is two-sided, then it must be stable.
\end{enumerate}
If the total curvatures of the $M_i$ are bounded by $\beta$, then $S$ has at most $\beta/(4\pi)$ points.
\end{theorem}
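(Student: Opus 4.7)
My approach has four steps: (1) a uniform total curvature bound from Gauss--Bonnet plus the Gauss equation, (2) extraction of a varifold limit and definition of a discrete singular set $S$ via curvature concentration, (3) smooth convergence off $S$ plus a removable-singularity argument at $S$, and (4) a Jacobi field argument for stability in the multiplicity $>1$ case. For step (1), on a minimal surface $\kappa_1 = -\kappa_2$, so the Gauss equation reduces to $K = K^{\mathrm{amb}}(\Tan(M,\cdot)) - \frac{1}{2}|A|^2$ and hence $\TC(M) = \int_M K^{\mathrm{amb}} - \int_M K$. The first integral is controlled by the area bound. For the second, apply Gauss--Bonnet to $M_i \cap \Omega'$, where $\Omega' \subset \subset \Omega$ is cut out by a generic smooth level set of an exhaustion function, chosen via a coarea argument so that $\partial(M_i \cap \Omega')$ has uniformly bounded length and uniformly bounded total geodesic curvature. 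Combined with the genus bound and a bound on the number of boundary components (each of definite length by monotonicity), this yields a uniform $\TC$ bound $\beta$ on each compact subset of $\Omega$.

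For step (2), area bounds and approximate $g$-stationarity yield a subsequential varifold limit $V$ that is $g$-stationary and integral. Define
\[
S = \bigl\{\, p \in \Omega : \limsup_{r \to 0}\limsup_{i \to \infty} \TC(M_i \cap B_r(p)) \geq 4\pi \,\bigr\}.
\]
The threshold $4\pi$ is the minimum total curvature of any nonflat complete properly embedded minimal surface of finite total curvature in $\RR^3$ (the catenoid); below this threshold the Choi--Schoen-type small-energy curvature estimate, adapted to the Riemannian setting, yields a pointwise $|A|$-bound on smaller balls. Since total curvature at most $\beta$ can support at most $\beta/(4\pi)$ concentration points, $|S| \leq \beta/(4\pi)$ on any compact set where $\TC \leq \beta$, and in particular $S$ is discrete.

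For step (3), off $S$ the pointwise $|A|$-bound combined with standard elliptic regularity for the minimal surface equation upgrades varifold to smooth convergence with integer multiplicity; embeddedness of the limit comes from the maximum principle. This yields a smooth embedded $g$-minimal surface $M^\circ$ on $\Omega \setminus S$. Near each $p \in S$, $M^\circ$ has locally bounded area and total curvature, so it consists of finitely many minimal ends approaching $p$; a removable-singularity argument (monotonicity plus conformal parametrization of each end, in the spirit of Choi--Schoen) extends $M^\circ$ smoothly and embeddedly across $p$ to give $M \subset \Omega$.

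For step (4), on each component $\Sigma$ of $M$ the convergence multiplicity $m$ is constant on $\Sigma \setminus S$. If $m = 1$, Allard's regularity upgrades multiplicity-one convergence into smooth convergence everywhere on $\Sigma$, so $\Sigma \cap S = \emptyset$. If $m \geq 2$ and $\Sigma$ is two-sided, two adjacent sheets of $M_i$ can be written as graphs $u_{i,1} < u_{i,2}$ of the normal bundle of $\Sigma$ over a small domain; the normalized difference $w_i / \sup w_i$, where $w_i = u_{i,2} - u_{i,1} > 0$, satisfies a linear elliptic equation converging to the Jacobi equation on $\Sigma$. A subsequential limit $\phi$ is a nonnegative bounded Jacobi field on $\Sigma \setminus S$, extends across the isolated singular points by removable singularity, and is strictly positive by the strong maximum principle; existence of a positive Jacobi field on $\Sigma$ implies stability. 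The main obstacle is the sharp $4\pi$ small-energy curvature estimate plus removable-singularities theorem in step (3), whose sharpness depends on a classification of blow-up limits as complete properly embedded minimal surfaces in $\RR^3$ of finite total curvature; the remainder of the paper refines this blow-up analysis further.
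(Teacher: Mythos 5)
Your overall architecture is the same as the paper's: a Gauss--Bonnet/genus argument for the local total curvature bound (the paper simply cites Ilmanen for this step), a concentration set $S$ defined by the $4\pi$ threshold from the small-total-curvature estimate, smooth convergence off $S$ plus removal of singularities, Allard regularity in the multiplicity-one case, and a positive Jacobi field in the higher-multiplicity case. Two steps, however, do not work as written.

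The more serious one is in your step (4), where you assert that the limit $\phi$ is a \emph{bounded} Jacobi field on $\Sigma\setminus S$ and ``extends across the isolated singular points by removable singularity.'' Neither claim is justified. If you normalize by $w_i(p)$ at a fixed $p\notin S$ (which is what the Harnack argument requires), then $\phi$ is only locally bounded on $\Sigma\setminus S$, and a positive solution of $\Delta\phi+q\phi=0$ on a punctured disk need not be bounded near the puncture (e.g.\ $\log(1/r)$ when $q\equiv 0$), so it need not extend. If instead you normalize by a supremum taken over punctured neighborhoods of $S$, you must still rule out that the limit vanishes identically. The paper sidesteps this entirely: it applies Fischer-Colbrie--Schoen on $\Sigma\setminus S$ to conclude that $\Sigma\setminus S$ is stable, and then uses the fact that points have zero capacity in dimension two (the logarithmic cutoff of Theorem~\ref{sobolev-theorem} and Corollary~\ref{cutoff-corollary}) to conclude that $\Sigma$ and $\Sigma\setminus S$ have the same Jacobi eigenvalues, hence that $\Sigma$ is stable. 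You should replace your extension claim with that cutoff argument, or else actually prove $\phi$ is bounded near $S$. The second, smaller issue is in step (1): monotonicity does not give each boundary component of $M_i\cap\Omega'$ a definite length (a minimal surface can meet a generic level set in arbitrarily short circles), so your bound on the number $b$ of boundary components, and hence on $-2\pi\chi=2\pi(2g+b-2)$, is not established as stated. The standard repair, which is the content of the Ilmanen result the paper cites, is to choose the cutting radius by a coarea argument and to estimate the combination $2\pi(2g+b)+\int\kappa_g$ as a whole, since a short boundary circle contributes geodesic curvature that compensates its contribution to $b$.
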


\begin{proof}
See \cite{ilmanen-singularities}*{Theorem~3} for a proof that the total curvatures of the $M_i$ are uniformly bounded on compact
subsets of $\Omega$.

If the supremum of the total curvatures of the $M_i$ is less than $4\pi$, then
we get smooth, subsequential convergence (possibly with multiplicity) everywhere.
(This follows from the curvature estimate~\cite{white-utah}*{Theorem~24} 
or~\cite{white-curvature-estimates}*{p. 247-248}.)

It follows (after passing to a subsequence) that there is a discrete set $S$ such that
the $M_i$ converge smoothly to $M$ on compact subsets of $\Omega\setminus S$, where
$M\setminus S$ is a smooth minimal surface properly immersed in $\Omega\setminus S$.
Furthermore, if $W$ is an open subset of $\Omega$, then the number of points in $S\cap W$
is at most
\[
   \frac1{4\pi} \limsup_i \TC(M_i\cap W).
\]
See~\cite{white-curvature-estimates} or~\cite{white-utah}*{theorem~25} for details.

Since the $M_i$ are embedded, $M\setminus S$ has no transverse self-intersections.
Hence $M\setminus S$ is smooth and embedded, possibly with multiplicity.  It also follows that the points in $S$
are removable singularities of $M$.  
(This removal of singularities theorem can be proved in a variety of ways.  
See the appendix for one proof.)
In other words, $M$ is a smooth, embedded surface, possibly with multiplicity $>1$.

For simplicity, let us assume $M$ has just one connected component.

If $M$ has multiplicity $1$, then the convergence of $M_i$ to $M$ is smooth everywhere
by Allard's Regularity Theorem \cite{allard-first-variation}*{\S8} or \cite{simon-book}*{\S23--\S24}, 
or by the easy version of the 
Allard Regularity Theorem  in \cite{white-local-regularity}*{Theorem~1.1} or \cite{white-utah}.
(The proof in \cite{white-local-regularity} is for compact surfaces, but
that proof can easily be modified to handle proper, non-compact surfaces.)

Thus suppose that the $M_i$ converge to $M$ with multiplicity $k>1$,
and suppose that $M$ is two-sided.  Since the convergence is smooth on compact
subsets of $\Omega\setminus S$, 
we can (away from $S\cup \partial \Omega$)
express $M_i$ as the union of $k$ disjoint, normal graphs over $\Sigma$. Since $M_i$ is embedded,
the functions can be ordered.   
 Let $\phi_i$ be the difference of the largest and the smallest functions.
Let $p$ be a point in $\Sigma\setminus S$.   
By standard PDE, $\phi_i$ satisfies a  second-order linear elliptic equation.  By the Harnack inequality
and the Schauder estimates, the functions $\phi_n/ |\phi_n(p)|$ converge smoothly (after
passing to a subsequence) to a positive jacobi field $\phi$ on $\Sigma\setminus S$.
By~\cite{fischer-colbrie-schoen}*{Theorem~1}, existence of such a $\phi$ implies that 
  $\Sigma\setminus S$ is stable.  
A standard cut-off argument (cf. Corollary~\ref{cutoff-corollary})
 shows that  $\Sigma$ and $\Sigma\setminus S$ have
the same jacobi eigenvalues.  Thus $\Sigma$ is stable.

This completes the proof of Theorem~\ref{compactness-theorem}.
\end{proof}

The remaining results are local, so we can assume that $\Omega$ is an open subset
of $\RR^3$. (Of course the metrics $g_i$ and $g$ need not be flat.)
We let $\lambda(M-p)$ denote the result of translating $M$ by $-p$ and
then dilating by $\lambda$.

\begin{theorem}[Blow-up Theorem]\label{blow-up-theorem}
Suppose in the Compactness Theorem~\ref{compactness-theorem} that $\Omega$ is an open subset of $\RR^3$.
Suppose $p_i\in M_i$ converges to $p\in M$, and that $\lambda_i\to\infty$.

Then, after passing to a subsequence, the surfaces 
\[
  M_i':=\lambda_i(M_i-p_i)
\]
converge smoothly away from a finite set $Q$ to a complete, properly embedded, $g(p)$-minimal
surface $M'$ of finite total curvature. 

Furthermore, $M'$ must be one of the following:
\begin{enumerate}
\item a multiplicity $1$ plane,
\item a complete, non-flat, properly embedded surface of finite total curvature, with multiplicity $1$, or
\item the union of two or more (counting multiplicity) parallel planes.
\end{enumerate}
In cases (1) and (2), the convergence of $M_n'$ to $M'$ is smooth everywhere.
\end{theorem}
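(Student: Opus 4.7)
The plan is to apply the Compactness Theorem~\ref{compactness-theorem} to the blown-up sequence $M_i'$ and then use classical results about complete minimal surfaces of finite total curvature in $\RR^3$ to classify the possible limits.

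First I would verify the hypotheses of Theorem~\ref{compactness-theorem} for $(M_i', \hat g_i)$, where $\hat g_i$ is the rescaled metric on $\RR^3$ defined by $\hat g_i(y) = g_i(p_i + y/\lambda_i)$, so that the map $x \mapsto \lambda_i(x-p_i)$ is a homothety from $(\Omega, g_i)$ onto its image with metric $\lambda_i^{-2}\hat g_i$. Then $\hat g_i \to g(p)$ smoothly on every compact subset of $\RR^3$, and $M_i'$ is $\hat g_i$-minimal (rescaling by a positive constant preserves minimality). Local area bounds on $M_i' \cap B_R(0)$ come from monotonicity for $M_i$ at $p_i$: $\area(M_i \cap B_{R/\lambda_i}(p_i)) \leq C(R/\lambda_i)^2$ uniformly in $i$, which rescales to $O(R^2)$. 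Local genus bounds on $M_i' \cap B_R(0)$ follow from the uniform bound on $\genus(M_i \cap B_\rho(p))$ for small $\rho$, since the rescaling is a diffeomorphism. By Theorem~\ref{compactness-theorem}, after passing to a subsequence $M_i'$ converges smoothly away from a discrete set $Q \subset \RR^3$ to a properly embedded $g(p)$-minimal surface $M'$.

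To see $M'$ has finite total curvature, observe that $\TC$ is invariant under constant rescaling of the ambient metric and under Euclidean dilation in dimension two, so $\TC(M_i' \cap B_R(0)) = \TC(M_i \cap B_{R/\lambda_i}(p_i))$, which is bounded independently of both $i$ and $R$ by the locally uniform $\TC$ bound of Theorem~\ref{compactness-theorem} near $p$. Lower semicontinuity gives $\TC(M') < \infty$, and the last sentence of Theorem~\ref{compactness-theorem} then bounds $|Q|$, so $Q$ is in fact finite. A linear change of coordinates lets us treat $g(p)$ as the Euclidean metric, so $M'$ becomes a complete properly embedded minimal surface in Euclidean $\RR^3$ with finite total curvature; being properly embedded without boundary in $\RR^3$, it is orientable, hence two-sided.

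For the trichotomy I analyze each connected component $\Sigma$ of $M'$. If $\Sigma$ has multiplicity $>1$, Theorem~\ref{compactness-theorem} shows $\Sigma$ is stable; the classical result of Fischer-Colbrie--Schoen (and do Carmo--Peng, Pogorelov) then forces $\Sigma$ to be a plane, and the Hoffman--Meeks half-space theorem forces every other component of $M'$ to be a plane parallel to $\Sigma$, giving case~(3). If every component has multiplicity~$1$, Theorem~\ref{compactness-theorem} gives smooth convergence everywhere, and the half-space theorem again forbids a planar and a non-flat component from coexisting, leaving case~(1) or case~(2). The main technical obstacle is the uniform bookkeeping in the rescaling — ensuring the $\hat g_i$ converge smoothly on compact sets and that the area, genus, and total curvature bounds for $M_i$ transfer to $M_i'$ uniformly in $R$; once these are in hand the classification ingredients are standard.
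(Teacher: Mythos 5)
Your proposal is correct and follows essentially the same route as the paper: rescale, invoke the monotonicity formula for local area bounds, apply the Compactness Theorem~\ref{compactness-theorem}, and then combine stability of higher-multiplicity components (via Fischer-Colbrie--Schoen and do Carmo--Peng) with the Strong Halfspace Theorem to obtain the trichotomy; the paper phrases the last step contrapositively (if $M'$ is not a union of parallel planes, the halfspace theorem makes it connected, non-flatness makes it unstable, hence multiplicity one), but the content is the same. One minor imprecision: in your multiplicity-one branch the halfspace theorem does not rule out two disjoint multiplicity-one parallel planes --- that configuration simply lands in case~(3) rather than in cases~(1) or~(2) as you assert.
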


\begin{proof}
The monotonicity formula implies that the areas of the $M_i'$ are uniformly
bounded on compact sets.
Thus the subsequential convergence to a complete, smooth, properly embedded $g(p)$-minimal surface
$M'$ of finite total curvature and the finiteness of the set $Q$ follow immediately from
the Compactness Theorem~\ref{compactness-theorem}. 

Suppose that $M'$ is not the union of one or more parallel planes.
By the Strong Halfspace Theorem~\cite{hoffman-meeks-halfspace}*{Theorem~2},
    $M'$ is connected.
Since $M'$ is not a plane, it is 
unstable (by~\cite{fischer-colbrie-schoen} or \cite{do-carmo-peng}).
Thus by the Compactness Theorem~\ref{compactness-theorem}, $M'$ has multiplicity $1$.

The smooth convergence everywhere in cases (1) and (2) follows 
from the Compactness Theorem~\ref{compactness-theorem}.
\end{proof}

\begin{theorem}[No\,-Tilt Theorem]\label{no-tilt-theorem}
In cases (2) and (3) of the Blow-up Theorem~\ref{blow-up-theorem}, 
the ends of $M'$ are parallel to $\Tan(M,p)$.
\end{theorem}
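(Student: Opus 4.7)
Let $T = \Tan(M,p)$ and let $k$ denote the multiplicity with which $M_i\to M$ near $p$; in cases (2) and (3) of Theorem~\ref{blow-up-theorem} one automatically has $k\ge 2$, since if $p$ lay in a multiplicity-$1$ component the convergence would be smooth there and the blow-up would land in case (1). By the Blow-up Theorem, $M'$ has finite total curvature, so its tangent cone at infinity is a sum of planes through the origin (one per end of $M'$, counted with the appropriate multiplicity). The No-Tilt Theorem is therefore equivalent to the statement that this tangent cone at infinity equals the plane $T$ with multiplicity $k$.

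\textbf{Intermediate-scale blow-up.} The plan is to choose an intermediate scale $r_i\to 0$ with $r_i\lambda_i\to\infty$, and to consider the rescaled surfaces
\[
    N_i := r_i^{-1}(M_i - p_i).
\]
This sequence admits two complementary descriptions. Since $N_i = (r_i\lambda_i)^{-1}M_i'$ with $r_i\lambda_i\to\infty$, and $M_i'\to M'$ smoothly away from a finite set (Theorem~\ref{blow-up-theorem}), a diagonal argument identifies the limit of $N_i$ with the tangent cone of $M'$ at infinity. On the other hand, because $r_i\to 0$ and $M$ is a smooth embedded surface through $p$ with tangent plane $T$ of multiplicity $k$, one \emph{expects} $N_i\to kT$. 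Matching these two limits will force the tangent cone of $M'$ at infinity to equal $kT$, which is exactly the desired conclusion.

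\textbf{The main obstacle.} The crux of the argument is to justify the second, expected limit: at a well-chosen scale $r_i\to 0$, one must show that $M_i$ truly resembles $k$ nearly-planar sheets parallel to $T$. The difficulty is that $p$ can lie in the singular set $S$ of the convergence $M_i\to M$, so smooth convergence is not available on annuli shrinking to $p$. The resolution --- to be carried out in Sections~\ref{annulus-section} and~\ref{proof-section} --- is an annular decomposition lemma: using the uniform bounds on area, genus, and total curvature supplied by Theorem~\ref{compactness-theorem}, one produces an abundance of ``good'' scales $r$ at which $M_i\cap(B(p_i,r)\setminus B(p_i,r/2))$ consists of $k$ disjoint graphs over $T$ with $C^1$-norm as small as desired; this abundance is what allows the selection of $r_i\to 0$ with $r_i\lambda_i\to\infty$ along which $N_i\to kT$ on the unit annulus. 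Once this annular structure is in hand, the two expressions for $\lim N_i$ agree on the unit annulus; since both are cones with vertex at the origin, they agree everywhere, so the tangent cone of $M'$ at infinity equals $kT$, and every end of $M'$ is asymptotic to a plane parallel to $T$.
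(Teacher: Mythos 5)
Your reduction of the statement to ``the tangent cone at infinity of $M'$ equals $\Tan(M,p)$ with the appropriate multiplicity'' is fine, and the idea of testing an intermediate rescaling $N_i=r_i^{-1}(M_i-p_i)$ against both the outer limit $M$ and the inner limit $M'$ is a reasonable starting point (the paper also works at an intermediate scale). But there is a genuine gap, and it sits exactly where you placed the ``main obstacle'': the annular decomposition lemma you invoke is essentially the full content of the theorem, and it does not follow from the bounds on area, genus and total curvature in the way you suggest. Two concrete problems. First, matching your two descriptions of $\lim N_i$ requires a single scale $r_i$ that is simultaneously large enough that $M_i\cap(B(p_i,r_i)\setminus B(p_i,r_i/2))$ is still $k$ graphs over $\Tan(M,p)$ with heights $o(r_i)$ (this forces $r_i\ge\rho_i$ for some $\rho_i\to 0$ dictated by the rate of convergence $M_i\to M$), and small enough that $M_i'=\lambda_i(M_i-p_i)$ is still close to $M'$ out to radius $r_i\lambda_i$ (this forces $r_i\lambda_i\le\sigma_i$ for some $\sigma_i\to\infty$ dictated by the rate of convergence $M_i'\to M'$, which is only locally uniform). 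Since $\lambda_i$ is an arbitrary given sequence, the window $[\rho_i,\,\sigma_i/\lambda_i]$ may well be empty. Second, even granting a pigeonhole argument that most dyadic annuli between scales $1/\lambda_i$ and $1$ carry little total curvature --- so that on each such annulus $M_i$ is a union of almost-flat graphs over \emph{some} planes --- nothing controls the cumulative tilt of those planes as one descends through the roughly $\log\lambda_i$ annuli: the tilt change per annulus is controlled only by the curvature concentrated there, and a bounded total spread over an unbounded number of annuli does not keep the sum of the per-annulus tilts small. Ruling out exactly this slow cumulative tilting is what the theorem asserts, so the step cannot be waved through.

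The paper closes this gap with two devices for which your sketch has no substitute. The Annulus Lemma~\ref{annulus-lemma} shows that a graphical minimal annulus with bounded slope, rescaled so that the inner radius stays bounded, flattens at infinity; its proof is not a soft compactness argument but a normalization (making horizontal disks minimal and the outer boundary horizontal) followed by the maximum principle, which traps the blown-up limit in a halfspace and forces its tangent cone at infinity to be a horizontal plane. The proof of Theorem~\ref{theorem-restated} then locates the outermost ``steep'' scale $r_i$, applies the Annulus Lemma there, and runs a double induction on the multiplicity $m$ and on the total curvature bound $N$ to descend through the remaining intermediate scales: at each stage either the blow-up at scale $r_i$ is non-flat (and one is done), or a plane splits off and the multiplicity drops, or at least $4\pi$ of total curvature is shed outside a ball and $N$ drops. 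It is this induction that handles precisely the regime $r_i\lambda_i\to\infty$ in which your two descriptions of $\lim N_i$ cannot both be validated; without it, or some equivalent mechanism, the argument does not go through.
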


The proof will be given in Sections~\ref{annulus-section} and~\ref{proof-section}.

\begin{theorem}\label{nonflat-theorem}
Suppose, in the Compactness Theorem~\ref{compactness-theorem}, that $\Omega$ is an open subset of $\RR^3$.
Suppose that $p_i\in M_i$ converges to $p\in M$ and that $\Tan(M_i, p_i)$ does not
converge to $\Tan(M,p)$.
Then there exist $\lambda_i\to\infty$ such that, after a passing to a subsequence,
the surfaces
\[
  M_i':= \lambda_i (M_i - p_i)
\]
converge smoothly and with multiplicity $1$ to a complete, smooth, properly embedded, non-flat, $g(p)$-minimal surface $M'$
of finite total curvature.  Furthermore, the ends of $M'$ must be parallel to $\Tan(M,p)$.
\end{theorem}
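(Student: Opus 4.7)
The plan is to combine the Blow-up Theorem~\ref{blow-up-theorem} with the No-Tilt Theorem~\ref{no-tilt-theorem}. The key is to select rescalings $\lambda_i$ so that the blow-ups $M_i' := \lambda_i(M_i - p_i)$ have a subsequential limit that is non-flat; the Blow-up Theorem will then place this limit in case~(2), and the No-Tilt Theorem will identify the directions of its ends.

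First I would pass to a subsequence so that $\Tan(M_i, p_i) \to P_\infty$ for some plane $P_\infty$; by hypothesis $P_\infty \neq \Tan(M, p)$. It follows that $|A_{M_i}|$ must blow up in every neighborhood of $p_i$: otherwise Arzela--Ascoli, combined with the local area bounds, would yield smooth subsequential convergence of $M_i$ on a neighborhood of $p_i$ to a minimal surface, which as a varifold would have to coincide with $M$ near $p$; since the smooth embedded surface $M$ has a unique tangent plane at $p$, this would force $\Tan(M_i, p_i) \to \Tan(M, p)$, a contradiction.

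Next, I would use a Schoen-style point-picking argument to produce $\lambda_i \to \infty$ and points $q_i' \in M_i'$ at bounded distance from the origin such that, after passing to a subsequence, $|A_{M_i'}|$ is uniformly bounded on every fixed $B_R(0)$, while $|A_{M_i'}(q_i')| = 1$. Concretely, one chooses $r_i \to 0$ at which the scale-invariant quantity $r\,\sup_{B_r(p_i) \cap M_i} |A_{M_i}|$ first reaches a prescribed threshold, sets $\lambda_i = 1/r_i$, and invokes the point-picking lemma to promote this to a uniform curvature bound on slightly smaller balls. The Blow-up Theorem~\ref{blow-up-theorem} then provides a subsequential limit $M'$, complete, properly embedded, $g(p)$-minimal, and of finite total curvature. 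Passing $q_i' \to q_\infty \in \RR^3$ and using smoothness of the convergence near $q_\infty$, we obtain $|A_{M'}(q_\infty)| = 1 > 0$, which excludes cases~(1) and~(3) of the Blow-up Theorem (both being unions of parallel planes, hence flat). Hence $M'$ is as in case~(2): a non-flat, complete, properly embedded minimal surface of finite total curvature to which $M_i'$ converges smoothly and with multiplicity one.

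Applying the No-Tilt Theorem~\ref{no-tilt-theorem} in case~(2) then yields that the ends of $M'$ are parallel to $\Tan(M, p)$, as required. The main obstacle I expect is the point-picking step: because we are required to blow up at $p_i$ rather than at a point of maximum curvature, we must arrange $\lambda_i$ so that the high-curvature location does not escape to infinity in the rescaled picture. This is the standard but delicate bookkeeping of Schoen-type arguments, resolved by the scale-invariant choice of $r_i$ above.
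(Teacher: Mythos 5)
Your skeleton matches the paper's: blow up at the first scale at which curvature becomes large relative to the distance from $p_i$, invoke the Blow-up Theorem~\ref{blow-up-theorem}, and finish with the No-Tilt Theorem~\ref{no-tilt-theorem}. But there is a genuine gap at the crucial step where you exclude the flat alternatives. Your choice of $r_i$ (the first radius at which $r\,\sup_{B_r(p_i)\cap M_i}|A|$ reaches a threshold) controls $|A_{M_i'}|$ only on the unit ball: for $R>1$ the quantity $\sup_{B_R(0)\cap M_i'}|A_{M_i'}| = r_i \sup_{B_{Rr_i}(p_i)\cap M_i}|A_{M_i}|$ has a lower bound but no upper bound, since the monotone function $r\mapsto r\sup_{B_r}|A|$ may jump far above the threshold just beyond $r_i$ (e.g.\ a neck degenerating at a much smaller scale at distance $\approx 2r_i$ from $p_i$). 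So the claimed uniform curvature bound on every fixed $B_R(0)$ does not follow, the concentration set $Q$ of the Blow-up Theorem need not be empty, and your witness point $q_i'$ --- which lives in the \emph{closed} unit ball, where smooth convergence is not guaranteed on the boundary sphere --- may converge into $Q$. In that event you cannot conclude $|A_{M'}(q_\infty)|=1$, and case (3) of the Blow-up Theorem (a union of parallel planes, i.e.\ a flat limit with non-smooth convergence) is not excluded. Note also that after establishing curvature blow-up you never again use the hypothesis that $\Tan(M_i,p_i)\to P\ne \Tan(M,p)$; in the paper that hypothesis is used a second time and is precisely what closes this gap.

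The paper's way around this: by the choice of scale, convergence \emph{is} smooth on the open unit ball, so $\Tan(M',0)=\lim\Tan(M_i,p_i)=P\ne\Tan(M,p)$. If case (3) occurred, the No-Tilt Theorem~\ref{no-tilt-theorem} would force all the planes to be parallel to $\Tan(M,p)$; since $0\in M'$, this would give $\Tan(M',0)=\Tan(M,p)$, a contradiction. Hence the convergence is smooth everywhere, and only \emph{then} does one use the curvature witness on $\partial\BB(0,1)$ to conclude that $M'$ is non-flat. If you replace your exclusion of case (3) by this tangent-plane argument (which works equally well with your choice of $r_i$), the rest of your proof goes through.
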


\begin{proof}
By passing to a subsequence, we can assume that $\Tan(M_i, p_i)$ converges to a plane
$P$ not equal to $\Tan(M,p)$.
By the Blow-up Theorem~\ref{blow-up-theorem}, it suffices to show that we can choose $\lambda_i\to \infty$
such that a subsequential limit $M'$ of the surfaces
\[
  M_i':=\lambda_i(M_i-p_i)
\]
is not the union of one or more planes.

Let $r_i$ be the infimum of the numbers
$r>0$ such that
\[
    M_i\cap \BB(p_i,r)
\]
contains a point at which the principal curvatures are $\ge 1/r$.
The hypothesis on $\Tan(M_i, p_i)$ implies that the principal curvatures of the $M_i$ are
not bounded on any neighborhood of $p$, and hence that
\begin{equation*}
  \liminf_i r_i = 0.
\end{equation*}
By passing to a subsequence, we can assume that $r_i\to 0$.  Now let $\lambda_i=1/r_i$.

The Blow-up Theorem~\ref{blow-up-theorem} implies that, after passing to a further subsequence, the surfaces
\[
   M_i' := \lambda_i (M_i - p_i)
\]
converge to a limit surface $M'$.  

By the choice of $\lambda_i$, the surface $M_i'$ converges to $M'$ smoothly in the open
ball of radius $1$ about $0$.  Hence 
\[
\Tan(M',0) = \lim_i \Tan(M_i',0) = \lim_i \Tan(M_i,p_i) = P \ne \Tan(M,p),
\]
so
\begin{equation}\label{eq:different-planes}
  \Tan(M',0) \ne \Tan(M,p).
\end{equation}

We claim that $M_i'$ converges to $M'$ smoothly everywhere.
For, if not, then by the Blow-up Theorem~\ref{blow-up-theorem}, 
$M'$ would be the union of two or more planes parallel to $\Tan(M,p)$.
But that contradicts~\eqref{eq:different-planes}.  

Thus $M_i'$ converges smoothly to $M'$.
Since (by choice of $\lambda_i$) each $M_i'\cap\partial \BB(0,1)$ contains
a point of $M_i'$ where the principal curvatures are $\ge 1$, the surface $M'$ is smooth
and non flat. 
The remaining conclusions follow immediately from the Blow-up Theorem~\ref{blow-up-theorem}.
(Since $M'$ is not flat, we are in case (2) of that theorem.)
\end{proof}

\section{The Annulus Lemma}\label{annulus-section}

The proof of the No\,-Tilt Theorem relies heavily on the following lemma, 
which describes the behavior of nearly flat minimal annuli as the inner radius tends to $0$:

\begin{lemma}[Annulus Lemma]\label{annulus-lemma}
Let $g_i$ be a sequence of Riemannian metrics on
the cylinder
\[
   C(R,a):= \{ (x,y,z)\in\RR^3:  x^2+y^2\le R^2, \, |z|\le a \}
\]
that converge smoothly to a Riemannian metric $g$. 
For $i=1,2, \dots$, suppose  that $M_i\subset C(R,a)$ is a $g_i$-minimal surface
that is  the graph of a function
\[
   u_i: A(r_i,R)\to (-a,a),
\]
where 
\[
   A(r_i,R)= \{p\in\RR^2: r_i\le |p|\le R\}
\]
and where the radii $r_i$ are positive numbers that converge to $0$.
Suppose also that
\[
L:=\sup_i \sup |Du_i| < \infty,
\]
and that
\[
\text{$u_i\to 0$ smoothly on $A(\eta,R)$ for every $\eta>0$}.
\]
Let $\lambda_i$ be a sequence of numbers tending to infinity
such that 
\begin{equation}\label{eq:r-prime-finite}
  r' = \lim \lambda_ir_i <\infty.
\end{equation}

Let $\vv_i=(0,0,v_i)$ be a point on the $z$-axis such that
\begin{equation}\label{eq:not-too-far}
  c:= \sup_i\frac{\dist(\vv_i, M_i)}{r_i} <\infty.
\end{equation}
Let $M_i' = \lambda_i(M_i-\vv_i)$,
so that $M_i'$ is the graph of the function 
\begin{align*}
&u_i': A(\lambda_ir_i,\lambda_iR)\to \RR, \\
&u_i'(q) =  \lambda_i( u_i(q/\lambda_i) - v_i).
\end{align*}
Then, after passing to a subsequence, 
the $u_i'$ converge uniformly on compact subsets
of $\RR^2$ to a function
\[
  u': A(r',\infty)\to\RR.
\]
The convergence is smooth on compact subsets
of $\sqrt{x^2+y^2}>r'$, and
\begin{equation}\label{eq:flattens}
   \lim_{|q|\to\infty} |Du'(q)|=0.
\end{equation}
\end{lemma}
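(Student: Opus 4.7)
The plan is to first build the limit $u'$ by standard minimal-graph compactness, then obtain the gradient decay at infinity via a blow-down/Bernstein argument, using the hypothesis that $u_i \to 0$ on annuli to force the Bernstein limit to be constant.

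\emph{Uniform bounds and convergence.} A direct computation gives $Du_i'(q)=Du_i(q/\lambda_i)$, so $|Du_i'|\le L$. The hypothesis $\dist(\vv_i,M_i)\le c\,r_i$ yields a point $p_i^\ast=(q_i^\ast,u_i(q_i^\ast))\in M_i$ with $|q_i^\ast|,\,|u_i(q_i^\ast)-v_i|\le c\,r_i$; after rescaling, $\lambda_i q_i^\ast$ has bounded norm and $u_i'(\lambda_i q_i^\ast)=\lambda_i(u_i(q_i^\ast)-v_i)$ is bounded. Combined with the Lipschitz bound, this pins down $u_i'$ on each compact subset of $\RR^2$ that lies in its domain $A(\lambda_i r_i,\lambda_i R)$, which holds for $i$ large since $\lambda_i r_i\to r'$ and $\lambda_i R\to\infty$. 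Arzelà--Ascoli and a diagonal argument yield the uniform subsequential limit $u'$. To upgrade to smooth convergence on compact subsets of $\{|q|>r'\}$, note that $M_i'$ is minimal for the pulled-back metric $\tilde g_i(q):=g_i(q/\lambda_i+\vv_i)$ (up to an irrelevant conformal factor $\lambda_i^{-2}$); passing to a subsequence with $\vv_i\to\vv$, these converge smoothly on compact sets to the constant metric $g(\vv)$. Hence $u_i'$ solves a uniformly quasilinear elliptic PDE with smoothly convergent coefficients, so interior Schauder/bootstrapping promotes the $C^0\cap C^1$ bound to smooth subsequential convergence. After a linear change of coordinates on $\RR^3$ sending $g(\vv)$ to the Euclidean inner product, we may regard $u'$ as a Euclidean minimal graph on $A(r',\infty)$ with $|Du'|\le L$.

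\emph{Blow-down to a Bernstein limit.} Suppose for contradiction there exist $q_n'\in A(r',\infty)$ with $|q_n'|\to\infty$ and $|Du'(q_n')|\ge \alpha>0$. Set
\[
   v_n(q):=\frac{u'(|q_n'|\,q)}{|q_n'|}\qquad\text{on}\qquad A\!\left(\tfrac{r'}{|q_n'|},\infty\right).
\]
Scale-invariance of the Euclidean minimal surface equation makes each $v_n$ a Euclidean minimal graph with $|Dv_n|\le L$. The linear growth $|u'(q)|\le L|q|+O(1)$ bounds $v_n$ on compact sets, so after a subsequence $v_n\to v$ smoothly on compact subsets of $\RR^2\setminus\{0\}$. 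Because $v$ has bounded gradient, the isolated singularity at $0$ is removable, and $v$ extends to an entire Euclidean minimal graph. Bernstein's theorem gives $v(q)=a\cdot q+b$. Passing to a further subsequence so that $q_n'/|q_n'|\to e$, smooth convergence on the unit circle yields $|a|=|Dv(e)|=\alpha>0$.

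\emph{Diagonal extraction (the main obstacle).} I claim $v$ is in fact constant on an open set, contradicting $|a|>0$. Fix $\sigma\in(0,R)$ and an open annulus $K=\{q:r_1\le|q|\le r_2\}$ chosen so that $\sigma K$ has compact closure in $A(\eta,R)$ for some $\eta>0$. For each $n$ the set $\{|q_n'|\,q:q\in K\}$ is a compact subset of $A(r',\infty)$, so the uniform convergence $u_i'\to u'$ there lets us choose $i(n)\to\infty$ with
\[
   \sup_{q\in K}\bigl|u_{i(n)}'(|q_n'|\,q)-u'(|q_n'|\,q)\bigr|\le \tfrac1n
   \quad\text{and}\quad
   \sigma_n:=\frac{|q_n'|}{\lambda_{i(n)}}\in[\sigma/2,\,2\sigma];
\]
the second constraint is arrangeable because $\lambda_i\to\infty$ and $|q_n'|\to\infty$ allow, for each $n$, arbitrarily large $i$ with $|q_n'|/\lambda_i$ in this range. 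Unpacking the definition of $u_{i(n)}'$,
\[
   \frac{u_{i(n)}'(|q_n'|\,q)}{|q_n'|}=\frac{1}{\sigma_n}\bigl(u_{i(n)}(\sigma_n q)-v_{i(n)}\bigr).
\]
Pass to a subsequence with $\sigma_n\to\sigma^\ast\in[\sigma/2,2\sigma]$ and $v_{i(n)}\to v^\ast$. For $q\in K$ the points $\sigma_n q$ lie eventually in a fixed compact subset of $A(\eta,R)$, where by hypothesis $u_{i(n)}\to 0$ smoothly, so $u_{i(n)}(\sigma_n q)\to 0$. Therefore
\[
   v(q)=\lim_{n\to\infty}\frac{u'(|q_n'|\,q)}{|q_n'|}=-\frac{v^\ast}{\sigma^\ast}\qquad\text{for every }q\in K,
\]
which is independent of $q$. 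An affine function that is constant on an open set has zero linear part, so $a=0$, contradicting $|a|=\alpha>0$. The main obstacle is precisely this diagonal extraction: one must simultaneously arrange $i(n)\to\infty$, uniform closeness of $u_{i(n)}'$ to $u'$ at the growing scale $|q_n'|$, and $|q_n'|/\lambda_{i(n)}$ landing in a range where the smooth convergence $u_i\to 0$ on $A(\eta,R)$ can be applied.
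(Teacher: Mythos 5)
Your first two steps (local compactness to produce $u'$, and the blow-down/Bernstein reduction showing that any failure of \eqref{eq:flattens} would force the blow-down limit $v$ to be a non-constant affine function) are fine. The gap is in the ``diagonal extraction,'' and it is a genuine one. You need an index $i(n)$ satisfying simultaneously (a) $u_{i(n)}'$ is within $1/n$ of $u'$ on the compact set $|q_n'|K$, and (b) $\sigma_n=|q_n'|/\lambda_{i(n)}\in[\sigma/2,2\sigma]$. Your justification --- that for each $n$ there are \emph{arbitrarily large} $i$ with $|q_n'|/\lambda_i\in[\sigma/2,2\sigma]$ --- is backwards: for fixed $n$ the quantity $|q_n'|/\lambda_i$ tends to $0$ as $i\to\infty$, so only finitely many $i$ (possibly none, if the $\lambda_i$ have large gaps) satisfy (b). Meanwhile (a) forces $i(n)\ge I(n)$ where $I(n)$ is dictated by the (rate-free, Arzel\`a--Ascoli) convergence of $u_i'$ to $u'$ on a compact set of radius $\approx|q_n'|\to\infty$; nothing prevents $\lambda_{I(n)}\gg|q_n'|$, in which case (a) and (b) are incompatible. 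Swapping the roles (fixing $i$ and choosing $n(i)$ with $|q_{n(i)}'|\approx\sigma\lambda_i$) is no better: at radius $\sim\sigma\lambda_i$ one is at the edge of the domain of $u_i'$, where $u_i'=\lambda_i(u_i(\sigma\,\cdot)-v_i)=\lambda_i\cdot o(1)$ and no convergence to $u'$ has been established.

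The failure is structural, not just technical: the asymptotics of $u'$ at radius $s\to\infty$ encode $\lim_i \lambda_i(u_i(s\,\cdot/\lambda_i)-v_i)$, a double limit in which $s/\lambda_i$ ranges over intermediate scales between $r_i$ and $1$, whereas the hypothesis $u_i\to 0$ on $A(\eta,R)$ only controls the unit scale; there is no a priori bridge between the two regimes, which is exactly what your diagonalization tries to manufacture. The paper gets around this with a one-sided barrier argument rather than scale matching: after shrinking $R$ so the limit disk is strictly stable, the implicit function theorem produces a foliation by $g_i$-minimal disks that one straightens to horizontal planes, with $u_i\equiv 0$ on the outer circle; the maximum principle then forces $\max u_i$ to occur on the \emph{inner} circle, so normalizing $v_i=\max u_i$ gives $u'\le 0$ everywhere, and the tangent cone at infinity of $\operatorname{graph}(u')$ --- a multiplicity-one Lipschitz minimal cone, hence a plane --- must be horizontal because it lies in a halfspace. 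If you want to keep your blow-down framework, you still need some such global one-sided comparison to constrain $v$; the hypothesis $u_i\to0$ at unit scale alone will not reach it.
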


\begin{proof}
Except for the last statement~\eqref{eq:flattens}, the lemma is
straightforward, as we now explain.
Note that the graphs of the $u_i'$ have slopes bounded by $L$.
Also,
\[
   \limsup_i \dist(0, M_i') \le cr' < \infty
\]
by~\eqref{eq:r-prime-finite} and~\eqref{eq:not-too-far}.
By the Arzela-Ascoli Theorem, after passing to a subsequence, the $u_i'$ 
converge uniformly on compact subsets
of $\RR^2$ to a limit function
\[
  u': A(r',\infty)\to \RR.
\]
By standard PDE or by minimal surface regularity theory, the convergence is smooth
on compact subsets of $\sqrt{x^2+y^2}>r'$.

It remains to prove the last assertion~\eqref{eq:flattens}.
We remark that if~\eqref{eq:flattens} holds
for one choice of $\vv_i$, then it also holds for any other choice $\vv_i^*$ 
(subject to the condition~\eqref{eq:not-too-far}).
This is because if $\lambda_i(M_i-\vv_i)$ converges to $M'$, then, after
passing to a further subsequence, the surfaces $\lambda_i(M_i-\vv_i^*)$ converge
to a limit $M^*$, and clearly $M^*$ is a vertical translate of $M'$.

Let $D$ be the horizontal disk of radius $R$ centered at the origin.
The smooth convergence $M_i\to D$ away from the origin implies that the
mean curvature with respect to $g$ of $D$ vanishes everywhere except possibly
at the origin. By continuity, it must also vanish at the origin.  That is, $D$ is a $g$-minimal 
surface.

By replacing $R$ by a sufficiently small $\hat{R}>0$ and $M_i$ by 
$M_i\cap C(\hat{R},a)$,
we can assume that the disk $D$ is strictly stable.

\begin{claim}\label{disks-minimal-claim}
 It suffices to prove the lemma under that assumptions that
the outer boundary of $M_i$ lies in the plane $z=0$, i.e., that $u_i(p)\equiv 0$ when $|p|=R$,
and that horizontal disks (i.e., disks of the form $z=\text{constant}$) are $g_i$-minimal
for every~$i$.
\end{claim}

\begin{proof}[Proof of Claim~\ref{disks-minimal-claim}]
By the implicit function theorem and the strict stability of $D$, that there exist $\eps>0$ and $\delta>0$ 
with the following property:
for all $|t|\le \delta$, there is a unique smooth function 
\[
  f^t: \{ \sqrt{x^2+y^2}  \le R \} \to \RR
\]
such that $\|f^t\|_{2,\alpha} < \eps$, such that 
\[
  \text{$f^t = t$ on the circle $\sqrt{x^2+y^2}=R$},
\]
 and such that the graph of $f^t$ is a strictly stable, $g$-minimal disk.
 Note that $f^t$ depends smoothly on $t$.  Note also that $\frac{\partial}{\partial t}f^t$,
 which may be regarded as a jacobi field on the graph of $f^t$, is equal to $1$ on the boundary
 and therefore is everywhere positive by stability.
 Thus the map
 \[
  F:(x,y,z)\mapsto (x,y, f^t(x,y))
 \]
 is a smooth diffeomorphism from the cylinder $C(R,\delta)$
onto to its image.

Similarly (and also by the implicit function theorem), for all sufficiently large $i$
and for all $|t|\le \delta$, there is a smooth smooth function
\[
   f_i^t: \{\sqrt{x^2+y^2}\le R\} \to \RR
\]
such that $\|f^t_i\|_{2,\alpha}< \eps$,
such that  
\[
   \text{$f_i^t = u_i + t$ on the circle $\sqrt{x^2+y^2}=R$},
\]
and such that the graph of $f^t_i$ is a strictly stable, $g_i$-minimal surface.
Furthermore, 
\[
F_i^t: (x,y,z) \mapsto (x,y, f^t_i(x,y))
\]
defines a smooth diffeomorphism from $C(R,a)$ to its image, and  $F_i$
converges smoothly to $F$ as $i\to\infty$. 
(All these statements are consequences of the implicit function theorem.)

Now let $M_i'$ and $g_i'$ be the pull-back of $M_i$ and $g_i$ under the diffeomorphism $F_i$.
Then $M_i'$ and $g_i'$ satisfy all the hypotheses of the lemma, and, in addition, 
horizontal disks are $g_i'$ minimal and the outer boundary of the annulus $M_i'$ is a horizontal
circle centered at the origin.
This completes the proof of Claim~\eqref{disks-minimal-claim}.
\end{proof}

From now on, we will use the assumptions listed in Claim~\ref{disks-minimal-claim}.
By making a suitable diffeomorphic perturbation (supported near the origin)
of the form
\[
   (x,y,z)\mapsto (\phi(x,y), \psi(z)),
\]
and having the origin as a fixed point, we can further assume that the metric $g(0)$ coincides with the Euclidean metric
at the origin:
\begin{equation}\label{eq:euclidean}
    g(0)(\ee_i,\ee_j) = \delta_{ij}.
\end{equation}

We now prove~\eqref{eq:flattens}, under the additional assumptions indicated by 
Claim~\ref{disks-minimal-claim},  and also assuming~\eqref{eq:euclidean}.

If the $u_i$ are identically zero, there is nothing to prove.
Thus by passing to a subsequence, we may assume, without loss of generality,
that
\[
   z_i := \max u_i >0.
\]
(The case $\min u_i<0$ is proved in exactly the same way.)
By the maximum principle (and by the assumptions described in  Claim~\ref{disks-minimal-claim}),
 the maximum is attained on the inner boundary circle of $A(r_i,R)$.   
 (Recall that $z\equiv 0$ on the outer boundary circle.)
 Thus
\begin{equation}\label{eq:max-inside}
     u_i \le z_i = \max_{|p|=r_i}u_i(p).
\end{equation}

As explained earlier, the validity of the lemma  does not depend on the 
choice of $\vv_i=(0,0,v_i)$, so we may choose $\vv_i=(0,0,z_i)$.
It follows that $M_i'$ lies in the halfspace $z\le 0$ for all $i$, and thus so does $M'$:
\[
  u'\le 0.
\]
By~\eqref{eq:euclidean}, the surface $M'$ is minimal with respect to the standard Euclidean metric.

There are now many ways to see that $M'$ is horizontal at infinity.
For example, the tangent cone at infinity to $M'$ is a multiplicity-one Lipschitz
graph and therefore is a plane (because its intersection with the unit $2$-sphere must
be a geodesic).  Since it lies in the halfspace $\{z\le 0\}$, the plane must be
horizontal.

\end{proof}

\section{Proof of the No\,-Tilt Theorem}\label{proof-section}
 
We now prove the No\,-Tilt Theorem~\ref{no-tilt-theorem}.
We may assume that $p_i=p=0$. (Otherwise replace $M_i$ and $M$ by $M_i-p_i$
and $M-p$, and similarly for the metrics $g_i$ and $g$.)   
By rotation, we may assume that $\Tan(M,0)$ is horizontal. 
Thus it suffices to prove the following special case of the No\,-Tilt Theorem:

\begin{theorem}\label{theorem-restated}
Let $\Omega$ be an open subset of $\RR^3$ and let $g_i$ be a sequence of smooth
Riemannian metrics on $\Omega$ that converge smoothly to a Riemannian metric $g$.
Suppose that $M_i$ and $M$ are smooth, properly embedded surfaces 
in $\Omega$  such that $M_i$ is $g_i$-minimal, $M$ is $g$-minimal, 
 and such that $M_i$ converges smoothly,
with some finite multiplicity, to $M$  away from a discrete set of points.
Suppose also that 
\[
   \sup_i\TC(M_i) <\infty.
\]

Suppose that the origin is contained in each of the $M_i$,
and suppose that $\Tan(M,0)$ is horizontal.
Let $\lambda_i$ be
a sequence of numbers tending to $\infty$, and suppose that the dilated surfaces
$\lambda_i M_i$ converge smoothly away from a finite set of points
to a limit surface $M^*$. 

Then either $M^*$ is a multiplicity one plane, or the ends of $M^*$ are all horizontal:
\[
  \lim_{|q|\to \infty} \slope(\Tan(M^*,q)) = 0.
\]
\end{theorem}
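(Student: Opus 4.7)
I plan to apply the Annulus Lemma~\ref{annulus-lemma} to horizontal-graph ``sheets'' of the $M_i$ and combine it with the classical fact that all ends of a properly embedded minimal surface in $\RR^3$ of finite total curvature (with at least two ends) are asymptotic to parallel planes. After shrinking $\Omega$, I may assume $\Omega=C(R,a)$ and $M\cap C(R,a)=D$ is the horizontal disk through $0$, with $M_i\to D$ smoothly with multiplicity $k$ away from $0$. Fix a large $L$, and for each $j=1,\dots,k$ let $S_{i,j}$ be the maximal horizontal-graph extension (slope $\le L$) of the $j$-th sheet, expressed as a graph of $u_{i,j}$ over $A(r_{i,j},R)$; smooth off-$0$ convergence forces $r_{i,j}\to 0$. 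After a subsequence, $r_j^*:=\lim\lambda_i r_{i,j}\in[0,\infty]$ for each $j$.

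The direct cases go as follows. In case~(3) of Theorem~\ref{blow-up-theorem} ($M^*$ a union of parallel planes), each sheet with $r_j^*<\infty$ yields, via the Annulus Lemma applied with $\vv_i=0\in M_i$, a horizontal graph with slope tending to $0$ at infinity that sits inside a plane of $M^*$; a horizontal graph with slope $\to 0$ contained in an affine plane forces that plane to be horizontal, and by parallelism so are the others. In case~(2) ($M^*$ non-flat of finite total curvature, which forces at least two ends since a one-ended properly embedded finite-total-curvature minimal surface in $\RR^3$ is a plane), if some $r_{j_0}^*<\infty$ the same application of the Annulus Lemma exhibits an end of $M^*$ asymptotic to $\Tan(M,0)$; by the parallel-ends theorem all ends of $M^*$ are then horizontal.

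The main obstacle is case~(2) with $\lambda_i r_{i,j}\to\infty$ for every $j$: the dilated sheets are pushed out of every bounded ball in $\RR^3$ and the Annulus Lemma at scale $\lambda_i$ gives no direct information about $M^*$. I would handle this by rescaling with $\tilde\lambda_i:=1/\max_j r_{i,j}$, so that $\tilde\lambda_i r_{i,j}\le 1$ for every $j$ and $\tilde\lambda_i/\lambda_i\to 0$. After a further subsequence, $\tilde\lambda_i M_i$ converges to a limit $\tilde M^*$; a diagonal-subsequence argument comparing this with $\lambda_i M_i\to M^*$ identifies $\tilde M^*$ with the tangent cone at infinity of $M^*$. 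For a properly embedded finite-total-curvature minimal surface with $\ge 2$ parallel ends, this tangent cone is the common asymptotic plane $P_\infty$ taken with multiplicity equal to the number of ends, so $\tilde M^*$ falls under case~(3) of Theorem~\ref{blow-up-theorem}. Reapplying the Annulus Lemma at scale $\tilde\lambda_i$ (now legitimate, since the inner radii $\tilde\lambda_i r_{i,j}$ are bounded), each sheet's dilated limit is a horizontal graph with slope $\to 0$ at infinity lying inside $P_\infty$, so $P_\infty$ must be horizontal and the ends of $M^*$, being parallel to $P_\infty$, are horizontal. The technical crux is the diagonal-subsequence identification of $\tilde M^*$ with the tangent cone at infinity of $M^*$ and the verification that the rescaled sheet decomposition carries enough information to pin down $P_\infty$.
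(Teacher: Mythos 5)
Your treatment of the case $\lim_i\lambda_i r_{i,j}<\infty$ for some sheet is correct and is essentially the paper's argument (direct application of the Annulus Lemma, with $\vv_i=0$). But the case you correctly identify as the main obstacle --- all inner radii satisfying $\lambda_i r_{i,j}\to\infty$ --- is not closed by your proposal. The ``diagonal-subsequence identification'' of $\tilde M^*=\lim_i \tilde\lambda_i M_i$ with the tangent cone at infinity of $M^*$ is an unjustified interchange of limits: the tangent cone at infinity of $M^*$ is $\lim_{\rho\to\infty}\rho^{-1}\bigl(\lim_i \lambda_i M_i\bigr)$, whereas $\tilde M^*=\lim_i (\lambda_i r_i)^{-1}(\lambda_i M_i)$ with $\lambda_i r_i\to\infty$ at a rate you do not control. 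A diagonal argument only produces \emph{some} slowly growing $\rho_i$ for which $\rho_i^{-1}(\lambda_i M_i)$ converges to the blow-down of $M^*$; there is no reason $\rho_i$ can be taken equal to $\lambda_i r_i$. Between the scales $1/\lambda_i$ and $r_i$ the surfaces $M_i$ may concentrate curvature at arbitrarily many intermediate scales, and the assertion that the configuration of planes seen at scale $r_i$ has the same tilt as the asymptotic planes of $M^*$ (seen just beyond scale $1/\lambda_i$) is essentially the statement of the No\,-Tilt Theorem itself. So the key step begs the question. (A smaller omission: in your case~(3) with every $\lambda_i r_{i,j}\to\infty$, the sheets again give no information about the planes of $M^*$, and your ``main obstacle'' paragraph only discusses case~(2).)

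What is missing is the mechanism the paper uses to bridge the gap between the scales $r_i$ and $1/\lambda_i$: a double induction on the multiplicity $m$ of the convergence $M_i\to M$ at $0$ and on an integer bound $N$ for $\sup_i\TC(M_i\cap U)$. One passes to the intermediate rescaling $M_i'=(1/r_i)M_i\to M'$ and applies the Blow-up Theorem~\ref{blow-up-theorem} there. If $M'$ is non-flat with finite total curvature, then \emph{every} further dilation of $M_i'$, in particular by $\lambda_i'=\lambda_i r_i\to\infty$, converges to a multiplicity-one plane, so $M^*$ is a multiplicity-one plane and the first alternative of the conclusion holds. If $M'$ is a union of planes, the Annulus Lemma shows they are horizontal, and then either some plane misses the origin (so the multiplicity at $0$ drops below $m$) or a definite amount ($\ge 4\pi$) of total curvature is shed outside a fixed neighborhood of $0$ (so $N$ drops); in either case the inductive hypothesis applies to the sequence $M_i'$ with dilation factors $\lambda_i'=\lambda_i r_i$, and $\lambda_i'M_i'=\lambda_iM_i\to M^*$ gives the conclusion. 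Your proposal contains no inductive structure and no substitute for it, so the argument does not close.
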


\begin{proof}
Let $m$ be the multiplicity of the convergence $M_i\to M$.
If $M$ is not connected,  the multiplicity could be different on different components of $M$.
In that case, we let $m$ be the multiplicity on the connected component
of $M$ containing the origin.

Let $N$ be an integer such that $\sup_i\TC(M_i\cap U)<N$ for some open set $U$ containing 
the origin.

We will prove Theorem~\ref{theorem-restated} by double induction on the multiplicity $m$ and on $N$.
Thus we may assume that the theorem is true for surfaces $M_i'\to M'$  
(satisfying the hypotheses of
the theorem) provided
\begin{enumerate}
\item $m'<m$, or
\item $m'=m$ and $\sup_i\TC(M_i'\cap U' ) < N-1$ for some neighborhood $U'$ of $0$,
\end{enumerate}
where $m'$ is the multiplicity of convergence of $M_i'$ to $M'$ on the connected component
$M'$ containing the origin.

Since the result is local, we can assume that $M$ is topologically a disk.  By composing with a diffeomorphism,
we can assume that $M$ is a horizontal disk centered at the origin.  By composing with another diffeomorphism,
we can assume that the metric $g$ agrees with the Euclidean metric at the origin.
By replacing $\Omega$ by a small open set of the form
\[
     \{(x,y,z)\in \RR^3: x^2+y^2< R^2, \, |z| < a\},
\]
we can assume that the $\overline{M_i}$ are smooth manifolds-with-boundary that converge smoothly
 to $\overline{M}$ away from the origin.

If the convergence of $M_i$ to $M$ is smooth everywhere, then the result is trivially true:
in that case, every subsequence of $\lambda_iM_i$ has a further subsequence that converges smoothly
to the union of one or more horizontal planes.

Thus we may assume that the convergence is not smooth.
It follows that there is a sequence of points $p_i\in M_i$
converging to the origin such that $\Tan(M_i,p_i)$ does not converge to a horizontal plane.
By passing to a subsequence, we may assume that 
\begin{equation}\label{eq:steep}
     \slope(\Tan(M_i,p_i)) > L \ge 0
\end{equation}
for some $L>0$ and for all $i$.

Consider the set $S_i$ of points $q=(x,y,z)$ in $M_i$
such that 
\begin{align*}
&\text{$\slope(\Tan(M_i,q))\ge L$, or} \\
& |z|\ge L\sqrt{x^2+y^2}.
\end{align*}
Let 
\[
r_i = \max \{\sqrt{x^2+y^2}: (x,y,z)\in S_i\}.
\]
Note that $r_i>0$ by~\eqref{eq:steep} and that $r_i\to 0$ since $\overline{M_i}\to \overline{M}$ smoothly away from 
the origin.
Note also that 
\[
M_i\cap \{\sqrt{x^2+y^2}\ge r_i \}
\]
 is the union of $m$ graphs of functions
defined on
\[
  A(r_i,R) = \{ p\in \RR^2: r_i \le |p|\le R\}
\]
and that the tangent planes to those graphs all have slopes $\le L$.

By passing to a subsequence, we may assume that ${\lambda_i}r_i$ converges to a 
limit $r'$ in $[0,\infty]$.

If $r'<\infty$, the result follows immediately from the Annulus Lemma~\ref{annulus-lemma}.  (Apply the lemma to each
of the annular components of $M_i\cap \{\sqrt{x^2+y^2}\ge r_i\}$.)

Thus we may assume that $r'=\infty$:
\begin{equation}\label{eq:lambda-prime}
   \lambda_ir_i \to \infty.
\end{equation}

Let $M_i'$ be the result of dilating $M_i$ by $1/r_i$ about the origin.
By the Compactness Theorem~\ref{compactness-theorem}, the $M_i'$ converge smoothly (after passing to a
subsequence, and away from a discrete set) to a limit surface $M'$.
Since the slopes of the $M_i'$ are bounded by $L$ in the region $x^2+y^2\ge 1$,
the convergence $M_i'\to M'$ is smooth in the region $x^2+y^2>1$.

By definition of $r_i$, the the surfaces $M_i'$ and therefore  also $M'$ are contained in the set
\[
  \{ |z|\le \sqrt{x^2+y^2}\} \cup \{|z|\le 1\}.
\]
Applying the Annulus Lemma  
to each of the $m$-components of $M_i\cap\{x^2+y^2\ge r_i^2\}$, we see that
\begin{equation}\label{eq:horizontal}
 \text{$\slope(\Tan(M',p))\to 0$ as $|p|\to\infty$.}
\end{equation}
That is, the ends of $M'$ are horizontal.
Note that the number of ends of $M'$, counting multiplicities, is $m$.

The Blow-up Theorem~\ref{blow-up-theorem} asserts that one of the following must hold:
\begin{enumerate}
\item\label{good-case}
$M'$ is non-flat, complete, with finite total curvature, and the convergence $M_i'\to M'$ is 
smooth and multiplicity $1$
everywhere.
\item\label{bad-case} $M'$ is the a union of one or more parallel planes, possible with multiplicity.  
The convergence is smooth
except at isolated points.  
\end{enumerate}

In case (1), the surface $\lambda_i'M_i'$ converges smoothly to $\Tan(M',0)$ with multiplicity $1$
for every sequence $\lambda_i'\to\infty$; 
in particular, this holds for the  sequence $\lambda_i':=\lambda_i r_i$ 
(which tends to $\infty$ by~\eqref{eq:lambda-prime}).
But
\[
    \lambda_i'M_i' = (\lambda_i r_i)(1/r_i) M_i = \lambda_i M_i,
\]
so the $\lambda_iM_i$ converge with multiplicity $1$ to a plane (namely, $\Tan(M',0))$),
as desired.  This completes the proof in case (1).

Thus we may assume (2): that $M'$ is a union of parallel planes.
In this case, we know that the planes are horizontal since the ends are horizontal (see~\eqref{eq:horizontal}),
and that the number of planes (counting multiplicity) is $m$.

{\bf Case 2(a)}: $M'$ contains a plane not passing through the origin.
Then the plane that does pass through the origin has multiplicity $<m$.   

Let $\lambda_i'=\lambda_ir_i$, which tends to infinity by~\eqref{eq:lambda-prime}.
Then
\[
  \lambda_i'M_i' = \lambda_i r_i (1/r_i) M_i = \lambda_i M_i \to M^*.
\]
By the inductive hypothesis, $M^*$ either consists of a single multiplicity $1$ plane
or its ends are all horizontal.  Thus we are done in this case.

{\bf Case 2(b)}: $M'$ is the horizontal plane through $0$ with multiplicity $m$.

By the choice of $r_i$, there is a point $q_i=(x_i,y_i,z_i)$ in $M_i'$ such that
$x_i^2+y_i^2=1$ and such that
\begin{equation}\label{eq:witness}
          \slope(\Tan(M_i',q_i)) \ge L.
\end{equation}
In particular, the convergence $M_i'\to M'$ fails to be smooth at some point or points in
the circle $\{x^2+y^2=1, \, z=0\}$.

Let $U$ be the open ball of radius $1/2$ center at the origin.  Since (as we have just
shown) the convergence of $M_i'\setminus U$ to $M'\setminus U$ is not smooth
near the circle $\{x^2+y^2=1,  \, z=0\}$, it follows that
\[
   \limsup_i \TC(M_i'\setminus U) \ge 4\pi.
\]
In particular, we can assume (by passing to a subsequence) that
\[
     \TC(M_i'\setminus U) > 12
\]
for all $i$.  It follows that
\begin{align*}
\TC(M_i'\cap U) 
&= \TC(M_i') - \TC(M_i'\setminus U) 
\\
&< \TC(M_i') - 12
\\
&< N-12.
\end{align*}
Thus the proposition holds for the surfaces $M_i'$ and $M'$ by the inductive hypothesis.
Letting $\lambda_i'=\lambda_i r_i$ (which tends to $\infty$ by~\eqref{eq:lambda-prime}), we have
\[
  \lambda_i' M_i' = (\lambda_i r_i ) ((1/r_i)M_i) = \lambda_i M_i \to M^*.
\]
By the inductive hypothesis, $M^*$ satisfies the conclusions of the theorem.
\end{proof}

\section{Appendix}

\begin{theorem}\label{removal-theorem}
Let $\BB(p,r)$ be the open ball of radius $r$ centered at $p\in \RR^3$.
Suppose that $g$ is a smooth Riemannian metric on $\BB(p,r)$,
and that $M$ is a properly embedded, $g$-minimal surface in $\BB(p,r)\setminus \{p\}$
with finite total curvature and finite area, and that $p\in \overline{M}$.
Then $M\cup \{p\}$ is a smoothly embedded, $g$-minimal surface.
\end{theorem}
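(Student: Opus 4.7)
After translating we may take $p=0$, and by pre-composing with a smooth diffeomorphism of a neighborhood we may also assume that $g(0)$ equals the standard Euclidean inner product on $\RR^3$. The strategy is to identify the tangent cone to $M$ at $0$ via blow-up, show it must be a single plane with multiplicity one, and then invoke Allard's regularity theorem (equivalently, the smooth multiplicity-one convergence conclusion of the Compactness Theorem~\ref{compactness-theorem}).

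\textbf{Blow-up and the tangent cone.} For any sequence $\lambda_i\to\infty$, set $M_i:=\lambda_i M$, which is minimal for the rescaled metric $\tilde g_i(y):=g(y/\lambda_i)$; these metrics converge smoothly to $g(0)$. Finite area of $M$ near $0$ together with monotonicity gives locally uniform area bounds for the $M_i$. Scale-invariance of the two-dimensional total curvature yields
\[
\TC(M_i\cap\BB(0,R))=\TC\bigl(M\cap\BB(0,R/\lambda_i)\bigr)\to 0
\]
as $\lambda_i\to\infty$, since $\TC$ is a finite Borel measure on $M$ near $0$ and the sets $M\cap\BB(0,R/\lambda_i)$ shrink to $\emptyset$. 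In particular, local genus bounds follow (e.g.\ by Gauss--Bonnet), so Theorem~\ref{compactness-theorem} applies: a subsequence of $M_i$ converges to a properly embedded $g(0)$-minimal surface $M^*$, smooth off a discrete set. By monotonicity $M^*$ is a cone with vertex $0$, and since $\TC(M^*)=0$ it is flat, hence a finite union of planes through $0$. Embeddedness of the limit forces those planes to coincide, so $M^*=kP$ for a single plane $P$ and a multiplicity $k\ge 1$.

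\textbf{Ruling out $k\ge 2$.} Suppose $k\ge 2$ and, after rotating, take $P$ horizontal. The smooth convergence away from the singular set, applied at scales $\rho\to 0$ (using the vanishing of $\TC$ on $M\cap\BB(0,\rho)$ to rule out singular points in each annulus $\{\rho/2<|x|<2\rho\}$ for $\rho$ small), decomposes $M\cap\BB(0,\rho)\setminus\{0\}$ into an ordered union of $k$ graphs $z=u_1\le\cdots\le u_k$ over the horizontal punctured disk, each with small $C^1$-norm. Each $u_j$ solves the $g$-minimal surface equation, a uniformly elliptic quasilinear PDE on bounded-gradient solutions, so the classical removable-singularity theorem for bounded solutions extends $u_j$ smoothly across $0$. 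Because $0\in\overline M$ with tangent plane $P$, $u_j(0)=0$ for every $j$. The difference $v:=u_2-u_1\ge 0$ satisfies a linear elliptic equation (obtained by integrating the linearization along the segment from $u_1$ to $u_2$), attains its interior minimum $0$ at the origin, and therefore vanishes identically by the strong maximum principle, contradicting distinctness of the sheets. Hence $k=1$.

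\textbf{Conclusion and main obstacle.} With $k=1$, the Compactness Theorem gives smooth multiplicity-one convergence $M_i\to M^*$, or equivalently Allard's theorem applies to $M\cup\{0\}$ (a stationary integral $2$-varifold of density $1$ at $0$, since the length of $M\cap\partial\BB(0,\rho)$ tends to $0$ and hence $\{0\}$ contributes nothing to the first variation). Either way, $M\cup\{0\}$ is a smooth embedded $g$-minimal surface. The delicate step is the third one: one must iteratively apply the Compactness Theorem at shrinking scales to produce the ordered graph decomposition on punctured disks, and then combine the removable-singularity theorem for bounded minimal graphs with the strong maximum principle to exclude multiplicity $\ge 2$. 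The blow-up compactness and monotonicity in Steps 1--2 are standard, but keeping the sheets ordered and graphical at every scale, and justifying the removability plus the maximum-principle argument in the $g$-minimal surface equation, is where the work lies.
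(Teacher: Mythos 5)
Your overall architecture matches the paper's: blow up at $p$, identify the tangent cone as a multiplicity-$k$ plane, extend each sheet across the puncture, and use the strong maximum principle to force $k=1$. The difference is in the removability mechanism, and that is where there is a genuine gap. The paper applies Gulliver's removable-singularity theorem to each component $D_j$ of $M\cap\BB(0,\eps)$, which requires only that $D_j$ be a punctured disk of finite area with bounded mean curvature; it works with the conformal parametrization and needs no graphicality. You instead apply the classical removable-singularity theorem for bounded-gradient solutions of the minimal surface equation, which requires first knowing that each $D_j$ is a \emph{single-valued graph with bounded gradient over one fixed plane $P$ on an entire punctured disk}. You assert this ("decomposes $M\cap\BB(0,\rho)\setminus\{0\}$ into an ordered union of $k$ graphs $z=u_1\le\cdots\le u_k$ over the horizontal punctured disk"), but it does not follow from what you have established. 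The blow-up analysis gives only \emph{subsequential} convergence: for each sequence of scales a subsequence converges to \emph{some} plane through the origin, and a priori different sequences of scales may produce different planes. Equivalently, the unit normal of $M$ could drift as $|x|\to 0$ (the curvature estimate gives $|A|(x)\le \eps(|x|)/|x|$ with $\eps\to 0$, which bounds the oscillation of the normal on each dyadic annulus by $o(1)$ but does not make the sum over all annuli converge). Without uniqueness of the tangent plane at $0$ — a nontrivial fact you neither prove nor cite — the sheets need not be graphs over any fixed plane near the puncture, and the PDE removability theorem cannot be applied. A second, more minor, omission in the same step: even at a fixed scale, smooth convergence with multiplicity $k$ over an \emph{annulus} only makes $M$ locally graphical; a connected component could be a $d$-valued graph (nontrivial monodromy of the covering of the annulus). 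This one is fixable by the standard ordering argument (disjoint graphs over a connected set are ordered, the monodromy permutation preserves the order and is cyclic, hence trivial), but it too needs to be said.

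By contrast, the paper's route gets both issues for free: after Gulliver's theorem each $D_j\cup\{0\}$ is a branched minimal immersion of a disk, so the tangent plane at $0$ exists and is unique by the Taylor expansion of the conformal parametrization, and embeddedness then excludes branch points. (The paper's remark points to \cite{white-curvature-estimates}*{Theorem~2} for a proof that avoids Gulliver by controlling the Gauss map directly — which is essentially the missing tangent-plane-uniqueness input in your argument.) Your concluding steps are fine — the maximum-principle argument ruling out $k\ge 2$ is exactly the paper's, and the final appeal to Allard is correct though redundant once each $u_j$ extends smoothly — but as written the proof does not close without supplying uniqueness of the tangent plane or replacing the graphical removability step by an argument, like Gulliver's, that does not need it.
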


\begin{proof}
We may assume that $p=0$.
It follows from the first variation formula that $M\cap \BB(0,\rho)$ has finite area for every $\rho<r$.
(This is true for any bounded mean curvature variety in $\BB(0,r)\setminus\{p\}$
by the first variation formula.  See, for example, \cite{gulliver-removability}*{lemma 1}.)
Thus by replacing $\BB(0,r)$ by a smaller ball, we can assume that the the area of $M$ is finite
and that the total curvature of $M$ is less than $4\pi$.

Let $\lambda_i\to \infty$.  By the Compactness Theorem~\ref{compactness-theorem},
after passing to a subsequence, $\lambda_iM_i$ converges
smoothly on compact subsets of $\RR^3\setminus \{0\}$ to a $g(0)$-minimal surface $M'$.
Note that $M'$ is a $g(0)$-minimal cone (it is a tangent cone to $M$ at $0$) and is smooth without transverse
self-intersections, so it is a plane.

It follows that there is an $\eps>0$ such that the
function
\[ 
x\in M\cap \BB(0,\eps) \mapsto |x|
\]
has no critical points, which implies that  $M\cap \BB(0,\eps)$ is a union of surfaces $D_1,\dots, D_k$, each of which is topologically
a punctured disk.

By a theorem of Gulliver~\cite{gulliver-removability}, each $D_i\cap \{0\}$ is a (possibly branched) minimal disk.
However, since $D_i$ has no transverse self-intersections, $D_i\cap \{0\}$ is smoothly embedded.
By the strong maximum principle, there is only one such disk.
\end{proof}

\begin{remark}
A different proof (not using Gulliver's Theorem) is given in \cite{white-curvature-estimates}*{Theorem~2}.
\end{remark}

\begin{remark}
Whether the finite total curvature assumption is necessary is a very interesting  open
problem in minimal surface theory.
The theorem remains true if that assumption is replaced by the assumption that $M$ is stable~\cite{gulliver-lawson},
or by the assumption that $M$ has finite Euler characteristic~\cite{choi-schoen}*{Proposition~1}.
It also remains true if that assumption is replaced by the assumption that $M$ has finite genus.
(Using monotonicity and lower bounds on density, one can show that for sufficiently small $\eps>0$, 
the surface $M\cap \BB(p,\eps)$ is a union of finitely many surfaces homeomorphic to punctured disks,
to which one can then apply Gulliver's theorem~\cite{gulliver-removability}.)
\end{remark}

\begin{theorem}\label{sobolev-theorem}
Let $M$ be a smooth, two-dimensional Riemannian manifold without boundary, $f$ be a smooth function on $M$,
and $p$ be a point in the interior of $M$.  Then $C_c^\infty(M\setminus\{p\})$ is dense with
respect to the $H^1$ norm in $C_c(M)$.
\end{theorem}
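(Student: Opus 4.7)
The plan is to multiply the given function by a logarithmic cutoff vanishing near $p$, exploiting the fact that a single point has vanishing $H^1$-capacity in a two-dimensional manifold. First I would reduce to the case where the function $f \in C_c(M) \cap H^1(M)$ to be approximated is smooth: a standard mollification in coordinate charts combined with a partition of unity shows $C_c^\infty(M)$ is $H^1$-dense in $C_c(M) \cap H^1(M)$, so it suffices to approximate $f \in C_c^\infty(M)$.

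Next I would work in a geodesic normal chart $\BB(p,r_0)$ in which the Riemannian metric is comparable to the Euclidean one, and for each $\eps \in (0,1)$ define the logarithmic cutoff
\[
\chi_\eps(x) = \begin{cases} 0 & |x| \le \eps^2, \\[2pt] \dfrac{\log(|x|/\eps^2)}{\log(1/\eps)} & \eps^2 \le |x| \le \eps, \\[2pt] 1 & |x| \ge \eps, \end{cases}
\]
extended by $1$ outside the chart. A direct computation in polar coordinates yields $|\grad\chi_\eps| \le C/(|x|\log(1/\eps))$ on the annulus $\eps^2 \le |x| \le \eps$, hence
\[
   \int_M |\grad\chi_\eps|^2\,dA \;\le\; \frac{C}{\log(1/\eps)} \;\longrightarrow\; 0.
\]

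I would then verify that $\chi_\eps f \to f$ in $H^1$. The $L^2$ difference is controlled by $\|f\|_\infty^2 \cdot \area(\BB(p,\eps)) \to 0$, while
\[
   \grad(\chi_\eps f - f) \;=\; (\chi_\eps - 1)\grad f + f\,\grad\chi_\eps;
\]
the first term tends to $0$ in $L^2$ by dominated convergence (since $\grad f$ is bounded and the support of $\chi_\eps - 1$ shrinks to $\{p\}$), and the second by the cutoff estimate together with boundedness of $f$. Since $\chi_\eps f$ vanishes identically on a neighborhood of $p$, a standard mollification at a scale much smaller than $\eps^2$ produces an approximant in $C_c^\infty(M \setminus \{p\})$ within $H^1$-distance $\eps$ of $\chi_\eps f$; letting $\eps \to 0$ concludes the argument.

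The only non-routine ingredient is the $L^2$-bound on $\grad\chi_\eps$, which is sharp and depends essentially on the dimension being $2$: it is precisely the statement that a point has zero $H^1$-capacity in $\RR^2$. The construction fails in dimension one, and goes through (with the plain truncation $|x|/\eps$ in place of the logarithm) in dimensions $\geq 3$.
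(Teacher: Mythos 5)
Your proposal is correct and is essentially the paper's own argument: the paper uses exactly the same logarithmic cutoff (its interpolating factor $\frac{\ln|z|-\ln(\eps^2)}{\ln\eps-\ln(\eps^2)}$ equals your $\frac{\log(|x|/\eps^2)}{\log(1/\eps)}$), checks $H^1$ convergence, and then mollifies to land in $C^\infty_c(M\setminus\{p\})$. You simply spell out the capacity estimate and the reduction to smooth $u$ in more detail than the paper's ``one readily checks.''
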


Equivalently, $H^1_0(M)=H^1_0(M\setminus\{p\})$.

\begin{corollary}\label{cutoff-corollary}
If there is a $u\in C^\infty_c(M)$ such that 
\[
  \int ( |Du|^2 + f|u|^2) \,dA < 0,
\]
then there is a $u\in C^\infty_c(M\setminus \{p\})$ satisfying the same inequality.
\end{corollary}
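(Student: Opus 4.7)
The plan is to exploit the classical fact that a single point has zero $H^1$-capacity in dimension two, via an explicit logarithmic cutoff. It suffices to show that every $u\in C_c^\infty(M)$ can be approximated in $H^1$ by functions in $C_c^\infty(M\setminus\{p\})$; this gives the equivalent statement $H^1_0(M)=H^1_0(M\setminus\{p\})$, and the corollary follows by applying the approximation to the $u\in C_c^\infty(M)$ whose Rayleigh quotient is negative.

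Work in a coordinate chart around $p$ in which the metric is uniformly comparable to the Euclidean one on some disk $B_{r_0}(p)$, and let $r(x)$ denote the Euclidean distance to $p$ in that chart. For small $\epsilon>0$, define
\[
\chi_\epsilon(x)\;=\;\max\!\Bigl\{0,\ \min\!\Bigl\{1,\ \tfrac{\log(r(x)/\epsilon)}{\log(1/\sqrt{\epsilon})}\Bigr\}\Bigr\},
\]
extended by $1$ outside the chart. This is a Lipschitz function that vanishes on $B_\epsilon(p)$, equals $1$ outside $B_{\sqrt{\epsilon}}(p)$, and whose gradient is supported in the annulus $\epsilon\le r\le\sqrt{\epsilon}$ with $|\nabla\chi_\epsilon|\le C/(r\log(1/\sqrt{\epsilon}))$. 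Since on a $2$-manifold the area form in geodesic polars is comparable to $r\,dr\,d\theta$, the two-dimensional computation $\int_\epsilon^{\sqrt{\epsilon}}dr/r=\log(1/\sqrt{\epsilon})$ yields
\[
\int_M |\nabla\chi_\epsilon|^2\,dA\ \le\ \frac{C'}{\log^2(1/\sqrt{\epsilon})}\int_\epsilon^{\sqrt{\epsilon}}\frac{dr}{r}\ =\ \frac{C'}{\log(1/\sqrt{\epsilon})}\ \longrightarrow\ 0.
\]

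Given $u\in C_c^\infty(M)$, set $u_\epsilon:=u\,\chi_\epsilon$. Dominated convergence gives $u_\epsilon\to u$ in $L^2$. From $\nabla u_\epsilon-\nabla u=(\chi_\epsilon-1)\nabla u+u\,\nabla\chi_\epsilon$ one obtains
\[
\|\nabla u_\epsilon-\nabla u\|_{L^2}\ \le\ \|(\chi_\epsilon-1)\nabla u\|_{L^2}+\|u\|_\infty\,\|\nabla\chi_\epsilon\|_{L^2}\ \longrightarrow\ 0,
\]
so $u_\epsilon\to u$ in $H^1$. Each $u_\epsilon$ is Lipschitz and compactly supported in $M\setminus\{p\}$; standard mollification in a chart (and then cut off to stay away from $p$) replaces $u_\epsilon$ by a $C_c^\infty(M\setminus\{p\})$ function arbitrarily close to it in $H^1$, completing the approximation.

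I do not anticipate a real obstacle; this is a textbook zero-capacity argument. The one substantive point is the $2$-dimensional logarithmic calculation above, which is precisely what makes the cutoff have vanishing gradient energy despite interpolating from $0$ to $1$ across only a logarithmic range of scales. In dimension one no such cutoff exists, which is why the hypothesis $\dim M=2$ is essential.
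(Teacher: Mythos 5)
Your proposal is correct and follows essentially the same route as the paper: the paper proves Theorem~\ref{sobolev-theorem} by the identical logarithmic cutoff (interpolating between radii $\eps^2$ and $\eps$ rather than your $\eps$ and $\sqrt{\eps}$, an immaterial reparametrization), followed by mollification, and then deduces the corollary from the resulting $H^1$-density.
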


\begin{proof}[Proof of Theorem~\ref{sobolev-theorem}]
The theorem is essentially local, and independent of the choice of metric.
Thus we can assume that $M$ is the open unit disk $D$ in $\RR^2$ with the Euclidean metric and
that $p=0$.  Given $u\in C^\infty_c(D)$ and $0<\eps<1$, define $u_\eps:D\to\RR$ by
\[
u_\eps(z) 
= 
\begin{cases}
u(z) 
&\text{if $|z|\ge \eps$}, 
\\
\frac{ \ln |z| - \ln(\eps^2)}{\ln(\eps) - \ln (\eps^2)} u(z)  
&\text{if $\eps^2 \le |z| \le \eps$, and}
\\
0 
&\text{if $|z|\le \eps^2$}.
\end{cases}
\]
One readily checks that $u_\eps$ converges to $u$ in $H^1$ as $\eps\to 0$.
Of course $u_\eps$ is not smooth, but it is Lipschitz and compactly supported in $C^\infty_c(D\setminus\{0\})$,
so we can mollify to approximate it arbitrary well in $H^1$ by a function in $C^\infty_c(D\setminus\{0\}$.)
\end{proof}

\nocite{pedrosa-ritore}
\nocite{hoffman-wei}
\newcommand{\hide}[1]{}

\begin{bibdiv}

\begin{biblist}

\bib{allard-first-variation}{article}{
  author={Allard, William K.},
  title={On the first variation of a varifold},
  journal={Ann. of Math. (2)},
  volume={95},
  date={1972},
  pages={417--491},
  issn={0003-486X},
  review={\MR {0307015},
  Zbl 0252.49028.}}
  \hide{(46 \#6136)}
  
\bib{anderson}{article}{
   author={Anderson, Michael T.},
   title={Curvature estimates for minimal surfaces in $3$-manifolds},
   journal={Ann. Sci. \'Ecole Norm. Sup. (4)},
   volume={18},
   date={1985},
   number={1},
   pages={89--105},
   issn={0012-9593},
   review={\MR{803196 (87e:53098)}},
}

\bib{choi-schoen}{article}{
   author={Choi, Hyeong In},
   author={Schoen, Richard},
   title={The space of minimal embeddings of a surface into a
   three-dimensional manifold of positive Ricci curvature},
   journal={Invent. Math.},
   volume={81},
   date={1985},
   number={3},
   pages={387--394},
   issn={0020-9910},
   review={\MR{807063 (87a:58040)}},
   doi={10.1007/BF01388577},
}

\bib{ColdingMinicozzi-1}{article}{
   author={Colding, Tobias H.},
   author={Minicozzi, William P., II},
   title={The space of embedded minimal surfaces of fixed genus in a
   3-manifold. I. Estimates off the axis for disks},
   journal={Ann. of Math. (2)},
   volume={160},
   date={2004},
   number={1},
   pages={27--68},
   issn={0003-486X},
   review={\MR{2119717 (2006a:53004)}},
   doi={10.4007/annals.2004.160.27},
}

\bib{ColdingMinicozzi-2}{article}{
   author={Colding, Tobias H.},
   author={Minicozzi, William P., II},
   title={The space of embedded minimal surfaces of fixed genus in a
   3-manifold. II. Multi-valued graphs in disks},
   journal={Ann. of Math. (2)},
   volume={160},
   date={2004},
   number={1},
   pages={69--92},
   issn={0003-486X},
   review={\MR{2119718 (2006a:53005)}},
   doi={10.4007/annals.2004.160.69},
}

\bib{ColdingMinicozzi-3}{article}{
   author={Colding, Tobias H.},
   author={Minicozzi, William P., II},
   title={The space of embedded minimal surfaces of fixed genus in a
   3-manifold. III. Planar domains},
   journal={Ann. of Math. (2)},
   volume={160},
   date={2004},
   number={2},
   pages={523--572},
   issn={0003-486X},
   review={\MR{2123932 (2006e:53012)}},
   doi={10.4007/annals.2004.160.523},
}

\bib{ColdingMinicozzi-4}{article}{
   author={Colding, Tobias H.},
   author={Minicozzi, William P., II},
   title={The space of embedded minimal surfaces of fixed genus in a
   3-manifold. IV. Locally simply connected},
   journal={Ann. of Math. (2)},
   volume={160},
   date={2004},
   number={2},
   pages={573--615},
   issn={0003-486X},
   review={\MR{2123933 (2006e:53013)}},
}

\bib{do-carmo-peng}{article}{
   author={do Carmo, M.},
   author={Peng, C. K.},
   title={Stable complete minimal surfaces in ${\bf R}^{3}$ are planes},
   journal={Bull. Amer. Math. Soc. (N.S.)},
   volume={1},
   date={1979},
   number={6},
   pages={903--906},
   issn={0273-0979},
   review={\MR{546314 (80j:53012)}},
   doi={10.1090/S0273-0979-1979-14689-5},
}

\bib{fischer-colbrie-schoen}{article}{
   author={Fischer-Colbrie, Doris},
   author={Schoen, Richard},
   title={The structure of complete stable minimal surfaces in $3$-manifolds
   of nonnegative scalar curvature},
   journal={Comm. Pure Appl. Math.},
   volume={33},
   date={1980},
   number={2},
   pages={199--211},
   issn={0010-3640},
   review={\MR{562550 (81i:53044)}},
   doi={10.1002/cpa.3160330206},
}

\bib{gulliver-removability}{article}{
   author={Gulliver, Robert},
   title={Removability of singular points on surfaces of bounded mean
   curvature},
   journal={J. Differential Geometry},
   volume={11},
   date={1976},
   number={3},
   pages={345--350},
   issn={0022-040X},
   review={\MR{0431045 (55 \#4047)}},
}

\bib{gulliver-lawson}{article}{
   author={Gulliver, Robert},
   author={Lawson, H. Blaine, Jr.},
   title={The structure of stable minimal hypersurfaces near a singularity},
   conference={
      title={Geometric measure theory and the calculus of variations},
      address={Arcata, Calif.},
      date={1984},
   },
   book={
      series={Proc. Sympos. Pure Math.},
      volume={44},
      publisher={Amer. Math. Soc., Providence, RI},
   },
   date={1986},
   pages={213--237},
   review={\MR{840275 (87g:53091)}},
   doi={10.1090/pspum/044/840275},
}

\bib{hoffman-meeks-halfspace}{article}{
   author={Hoffman, D.},
   author={Meeks, W. H., III},
   title={The strong halfspace theorem for minimal surfaces},
   journal={Invent. Math.},
   volume={101},
   date={1990},
   number={2},
   pages={373--377},
   issn={0020-9910},
   review={\MR{1062966 (92e:53010)}},
   doi={10.1007/BF01231506},
}

\bib{hoffman-traizet-white-2}{article}{
   author={Hoffman, David},
   author={Traizet, Martin},
   author={White, Brian},
   title={Helicoidal minimal surfaces of prescribed genus, II},
   note={Preprint}
   date={2013},
}

\bib{hoffman-white-sequences}{article}{
   author={Hoffman, David},
   author={White, Brian},
   title={Sequences of embedded minimal disks whose curvatures blow up on a
   prescribed subset of a line},
   journal={Comm. Anal. Geom.},
   volume={19},
   date={2011},
   number={3},
   pages={487--502},
   issn={1019-8385},
   review={\MR{2843239 (2012j:53011)}},
   doi={10.4310/CAG.2011.v19.n3.a2},
}

\bib{ilmanen-singularities}{article}{
author={Ilmanen, Tom},
title={Singularities of mean curvature flow of surfaces},
date={1995},
 note={Preprint},
 eprint={http://www.math.ethz.ch/~ilmanen/papers/pub.html}
}

\bib{Kleene}{article}{
   author={Kleene, Stephen J.},
   title={A minimal lamination with Cantor set-like singularities},
   journal={Proc. Amer. Math. Soc.},
   volume={140},
   date={2012},
   number={4},
   pages={1423--1436},
   issn={0002-9939},
   review={\MR{2869127}},
   doi={10.1090/S0002-9939-2011-10971-7},
}

\bib{meeks-regular}{article}{
   author={Meeks, William H., III},
   title={Regularity of the singular set in the Colding-Minicozzi lamination
   theorem},
   journal={Duke Math. J.},
   volume={123},
   date={2004},
   number={2},
   pages={329--334},
   issn={0012-7094},
   review={\MR{2066941 (2005d:53014)}},
}

\bib{meeks-weber-bending}{article}{
   author={Meeks, William H., III},
   author={Weber, Matthias},
   title={Bending the helicoid},
   journal={Math. Ann.},
   volume={339},
   date={2007},
   number={4},
   pages={783--798},
   issn={0025-5831},
   review={\MR{2341900 (2008k:53020)}},
   doi={10.1007/s00208-007-0120-4},
}

\bib{simon-book}{book}{
  author={Simon, Leon},
  title={Lectures on geometric measure theory},
  series={Proceedings of the Centre for Mathematical Analysis, Australian National University},
  volume={3},
  publisher={Australian National University Centre for Mathematical Analysis},
  place={Canberra},
  date={1983},
  pages={vii+272},
  isbn={0-86784-429-9},
  review={\MR {756417},
  Zbl 0546.49019.}
}  \hide{ (87a:49001)}

\bib{white-space1}{article}{
   author={White, Brian},
   title={The space of $m$-dimensional surfaces that are stationary for a
   parametric elliptic functional},
   journal={Indiana Univ. Math. J.},
   volume={36},
   date={1987},
   number={3},
   pages={567--602},
   issn={0022-2518},
   review={\MR{905611 (88k:58027)}},
   doi={10.1512/iumj.1987.36.36031},
}

\bib{white-curvature-estimates}{article}{
   author={White, B.},
   title={Curvature estimates and compactness theorems in $3$-manifolds for
   surfaces that are stationary for parametric elliptic functionals},
   journal={Invent. Math.},
   volume={88},
   date={1987},
   number={2},
   pages={243--256},
   issn={0020-9910},
   review={\MR{880951 (88g:58037)}},
   doi={10.1007/BF01388908},
}	

\bib{white-local-regularity}{article}{
   author={White, Brian},
   title={A local regularity theorem for mean curvature flow},
   journal={Ann. of Math. (2)},
   volume={161},
   date={2005},
   number={3},
   pages={1487--1519},
   issn={0003-486X},
   review={\MR{2180405 (2006i:53100)}},
   doi={10.4007/annals.2005.161.1487},
}

\bib{white-currents}{article}{
   author={White, Brian},
   title={Currents and flat chains associated to varifolds, with an
   application to mean curvature flow},
   journal={Duke Math. J.},
   volume={148},
   date={2009},
   number={1},
   pages={41--62},
   issn={0012-7094},
   review={\MR{2515099 (2010g:49075)}},
   doi={10.1215/00127094-2009-019},
}

\bib{white-C1}{article}{
   author={White, Brian},
   title={Curvatures of embedded minimal disks blow up on subsets of $C^1$ curves},
   date={2015},
   eprint={http://arxiv.org/abs/1103.5551}
}

\bib{white-utah}{article}{
   author={White, Brian},
   title={Lectures on minimal surface theory},
   date={2013},
   eprint={http://arxiv.org/abs/1308.3325}
}

\end{biblist}

\end{bibdiv}

\end{document}